\documentclass[11pt,a4paper]{amsart}  
% \addtolength{\textwidth}{3 truecm}
% \addtolength{\textheight}{1 truecm}
% \setlength{\voffset}{-.5 truecm}
% \setlength{\hoffset}{-1.5 truecm}
% \parindent 0mm
% \parskip   5mm

%\usepackage{cmbright
%\usepackage[T1]{fontenc}
\usepackage{comment}
\usepackage{tikz-cd}
\usepackage{amsmath,amsthm,amssymb,enumerate,amsfonts,graphicx}

\usepackage{subcaption}
\usepackage{algorithm,algorithmic}

\usepackage{color}
\usepackage{epsfig}
\usepackage[numbers,sort&compress]{natbib}
\usepackage[T1]{fontenc}
\usepackage{algorithm,algorithmic}
\usepackage{stmaryrd}
\usepackage{paralist}
\usepackage{geometry}
\usepackage{tikz}
\usepackage{todonotes}
\tikzset{black node/.style={draw, circle, fill = black, minimum size = 5pt, inner sep = 0pt}}
\tikzset{normal/.style = {draw=none, fill = none, minimum size =0, rectangle}}

\makeatletter
\newtheorem*{rep@theorem}{\rep@title}
\newcommand{\newreptheorem}[2]{%
\newenvironment{rep#1}[1]{%
 \def\rep@title{#2 \ref{##1}}%
 \begin{rep@theorem}}%
 {\end{rep@theorem}}}
\makeatother

\newreptheorem{theorem}{Theorem}

% autoref names

\usepackage{aliascnt}

\newtheorem{theorem}{Theorem}%[section]

\newaliascnt{lemma}{theorem}
\newtheorem{lemma}[lemma]{Lemma}
\aliascntresetthe{lemma}

\newaliascnt{observation}{theorem}

\aliascntresetthe{observation}

\newaliascnt{corollary}{theorem}
\newtheorem{corollary}[corollary]{Corollary}
\aliascntresetthe{corollary}

\newaliascnt{conjecture}{theorem}

\aliascntresetthe{conjecture}

\newaliascnt{claim}{theorem}

\aliascntresetthe{claim}

\theoremstyle{definition}

\newcommand{\R}{\mathbb{R}}
\newcommand{\N}{\mathbb{N}}
\newcommand{\Z}{\mathbb{Z}}
\newcommand{\Q}{\mathbb{Q}}

%% operators
% \DeclareMathOperator{\tw}{\mathsf{tw}}
% \DeclareMathOperator{\polylog}{polylog}

\DeclareMathOperator{\OPT}{\mathrm{OPT}}

\DeclareMathOperator{\SA}{\mathsf{SA}}

\newcommand{\fvst}{\textsc{FVST}}

% \newcommand{\cVDapx}{\textsc{Cluster-VD-apx}}

% Macros used in main proof

%%%

\begin{document}

\title[A simple $7/3 $-approximation algorithm for FVST]{A simple $7/3 $-approximation algorithm\\ for feedback vertex set in tournaments}

\author[M.~Aprile]{Manuel Aprile}
\author[M.~Drescher]{Matthew Drescher}
\author[S.~Fiorini]{Samuel Fiorini}
\author[T.~Huynh]{Tony Huynh}
\address[M.~Aprile, M.~Drescher, S.~Fiorini]{\newline D\'epartement de Math\'ematique
\newline Universit\'e libre de Bruxelles
\newline Brussels, Belgium}
\email{manuelf.aprile@gmail.com, knavely@gmail.com, sfiorini@ulb.ac.be}
\address[T.~Huynh]{\newline School of Mathematics
\newline Monash University
\newline Melbourne, Australia}
\email{tony.bourbaki@gmail.com}

\thanks{This project was supported by ERC Consolidator Grant 615640-ForEFront. Samuel Fiorini and Manuel Aprile are also supported by FNRS grant T008720F-35293308-BD-OCP. Tony Huynh is also supported by the Australian Research Council.}

\date{\today}
\sloppy

\begin{abstract}
We show that performing just one round of the Sherali-Adams hierarchy gives an easy $7/3$-approximation algorithm for the Feedback Vertex Set (\fvst) problem in tournaments. This matches the best deterministic approximation algorithm for \fvst{} due to Mnich, Williams, and V{\'{e}}gh~\cite{MWV16}, and is a significant simplification and runtime improvement of their approach.  
\end{abstract}

\maketitle

\section{Introduction}
A \emph{feedback vertex set} (FVS) of a tournament $T$ is a set $X$ of vertices such that $T-X$ is acyclic. Given a tournament $T$ and (vertex) weights $w : V(T) \to \mathbb{Q}_{\geq 0}$, the \emph{Feedback Vertex Set} (\fvst) problem asks to find a feedback vertex set $X$ such that $w(X):=\sum_{x \in X} w(x)$ is minimum.  This problem has numerous applications, for example in determining election winners in social choice theory~\cite{banks85}.

We let $\OPT(T, w)$ be the minimum weight of a feedback vertex set of the weighted tournament $(T,w)$. An \emph{$\alpha$-approximation algorithm} for \fvst{} is a polynomial-time algorithm computing a feedback vertex set $X$ with $w(X) \leq \alpha \cdot \OPT(T,w)$.

Note that a tournament is acyclic if and only if it does not contain a directed triangle.  Therefore, the following is an easy $3$-approximation algorithm for \fvst{} in the unweighted case (the general case follows for instance from the \emph{local ratio technique}~\cite{bbfr2004}). If $T$ is acyclic, then $\varnothing$ is an FVS, and we are done.  Otherwise, we find a directed triangle $abc$ in $T$ and put all its vertices into the FVS. We then replace $T$ by $T-\{a,b,c\}$ and recurse. 

\subsection*{State of the Art}

The first non-trivial approximation algorithm for $\fvst{}$ was a $5/2$-approximation algorithm by Cai, Deng, and Zang~\cite{CDZ01}. Cai \emph{et al.}'s approach is polyhedral. It is based on the fact that the basic LP relaxation of \fvst{} is integral whenever the input tournament avoids certain subtournaments, see the next paragraphs for details.

Let $T$ be a tournament and $\triangle(T)$ denote the collection of all $\{a,b,c\} \subseteq V(T)$ that induce a directed triangle in $T$. The \emph{basic relaxation} for $T$ is the polytope 
$$
P(T) := \{ x \in [0,1]^{V(T)} \mid \forall \{a,b,c\} \in \triangle(T) : x_a + x_b + x_c \geq 1 \}.  
$$

Let $\mathcal T_5$ be the set of tournaments on $5$ vertices where the minimum FVS has size $2$. Up to isomorphism, $|\mathcal T_5|=3$ (see~\cite{CDZ01}). We say that $T$ is \emph{$\mathcal{T}_5$-free} if no subtournament of $T$ is isomorphic to a member of $\mathcal{T}_5$. More generally, let $\mathcal T$ be a collection of tournaments.  A \emph{$\mathcal{T}$-subtournament} of $T$ is a subtournament of $T$ that is isomorphic to some tournament of $\mathcal{T}$. We say that $T$ is \emph{$\mathcal T$-free} if $T$ does not contain a $\mathcal{T}$-subtournament.

Cai \emph{et al.} prove that $P(T)$ is integral as soon as $T$ is $\mathcal{T}_5$-free. In this case solving a polynomial-size LP gives a minimum weight FVS.  We let 
\textsc{CDZ$(T,w)$}, be the polynomial-time algorithm from~\cite{CDZ01}, that given a $\mathcal{T}_5$-free tournament $T$ and $w: V(T) \to \mathbb{Q}_{\geq 0}$, finds a minimum weight feedback vertex set of $T$.

A $5/2$-approximation algorithm follows directly from this. Using the local ratio technique, while $T$ contains a $\mathcal T_5$-subtournament $S$, one can reduce to a smaller instance with one vertex of $S$ removed. If one is aiming for a $5/2$-approximation algorithm, one can reduce to a $\mathcal{T}_5$-free tournament $T$, for which one can even solve the problem exactly by applying \textsc{CDZ$(T,w)$}.

The $5/2$-approximation algorithm of \cite{CDZ01} was improved to a $7/3$-approximation algorithm by Mnich, Williams, and V{\'{e}}gh~\cite{MWV16}. Loosely speaking, Mnich \emph{et al.}'s algorithm replaces $\mathcal{T}_5$ by $\mathcal{T}_7$, defined as the set of tournaments on $7$ vertices where the minimum FVS has size $3$. It is known that, up to isomorphism, $|\mathcal T_7|=121$ (see~\cite{MWV16}). 

Similarly, if one is aiming for a $7/3$-approximation algorithm, one can reduce to $\mathcal{T}_7$-free tournaments. In fact, instead of using the local ratio technique, \cite{MWV16} use iterative rounding, see the next paragraph. However, the basic relaxation is not necessarily integral for $\mathcal T_7$-free tournaments, so obtaining a $7/3$-approximation algorithm requires more work.

The algorithm in~\cite{MWV16} consists of two phases. Let the \emph{$\mathcal T_7$-relaxation} be the LP  obtained from the basic relaxation by adding the constraint $\sum_{v \in V(S)} x_v \geq 3$ for each $\mathcal{T}_7$-subtournament $S$ of $T$.
The first phase is an iterative rounding procedure on the $\mathcal T_7$-relaxation. This reduces the problem to a residual tournament which is $\mathcal T_7$-free.  The second phase is a $7/3$-approximation algorithm for FVST on the residual tournament, via an intricate layering procedure.

Recently, Lokshtanov, Misra, Mukherjee, Panolan, Philip, and Saurab~\cite{LMMPPS20} gave a \emph{randomized} $2$-approximation algorithm for \fvst{}. Their algorithm does not rely on~\cite{CDZ01}, but rather on the idea of guessing vertices which are not part of some optimal FVS and that of controlling the in-degree sequence of the tournament. The derandomized version of their algorithm runs in quasi-polynomial-time. A deterministic $2$-approximation algorithm would be best possible, since for every $\epsilon>0$, $\fvst{}$ does not have a $(2-\epsilon)$-approximation algorithm, unless the Unique Games Conjecture is false or P=NP~\cite{speckenmeyer89, KR08}.

\subsection*{Our Contribution}

We simplify Mnich \emph{et al.}'s $7/3$-approximation algorithm for \fvst{}~\cite{MWV16}. Our new algorithm is based on performing just one round of the \emph{Sherali-Adams hierarchy}~\cite{SA1990} on the basic relaxation, and is a significant simplification of~\cite{MWV16}. The following is our main theorem. Below, $\SA_r(T,w)$ denotes both the lower bound on $\OPT(T,w)$ provided by $r$ rounds of the Sherali-Adams hierarchy, and the corresponding linear program (LP).

\begin{theorem} \label{thm:main}
Algorithm \ref{alg:main} is a $7/3$-approximation algorithm for \fvst. More precisely, the algorithm outputs in polynomial time a feedback vertex set $X := F \cup F'$ such that $w(X) \leq \frac{7}{3} \SA_1(T,w) \leq \frac{7}{3} \OPT(T,w)$. 
\end{theorem}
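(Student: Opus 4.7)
\medskip
\noindent\emph{Proof plan.} The plan is to solve $\SA_1(T,w)$, producing an optimal extended solution $(x,y)$ consisting of vertex variables $x_v$ and pair variables $y_{uv}$, with $\sum_{v} x_v w(v) = \SA_1(T,w)$. From $(x,y)$ the algorithm constructs a set $F\subseteq V(T)$ of ``rounded'' vertices and then sets $F' := \textsc{CDZ}(T-F,\,w|_{V(T-F)})$. Correctness of $X := F\cup F'$ as an FVS is immediate, since any FVS of $T-F$ together with $F$ is an FVS of $T$.

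\medskip
\noindent\emph{Splitting the charge.} I would try to prove the two inequalities
\[
w(F) \;\leq\; \tfrac{7}{3}\sum_{v\in F} x_v w(v), \qquad w(F') \;\leq\; \sum_{v\notin F} x_v w(v),
\]
which sum to $\tfrac{7}{3}\SA_1(T,w)\leq \tfrac{7}{3}\OPT(T,w)$. The second inequality is the easier one and holds provided $T-F$ is $\mathcal{T}_5$-free: by Cai \emph{et al.}, $P(T-F)$ is then integral, so \textsc{CDZ} returns a minimum-weight FVS of $T-F$; and the restriction $x|_{V(T-F)}$ is trivially feasible for $P(T-F)$, because every directed triangle of $T-F$ is a directed triangle of $T$ and thus already satisfied by $x$.

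\medskip
\noindent\emph{Key technical step.} The crux is to construct $F$ so that both weight bounds hold \emph{and} $T-F$ is $\mathcal{T}_5$-free. Simple thresholding $F:=\{v : x_v \geq 3/7\}$ directly gives the first weight bound via $x_v \geq 3/7 \Rightarrow w(v)\leq \tfrac{7}{3}x_v w(v)$, but it does not guarantee $\mathcal{T}_5$-freeness of the residual: a pure $\mathcal{T}_5$ tournament with uniform values $x_v = 2/5$ and a suitable constant $y_{uv}$ can be checked to be $\SA_1$-feasible, so no vertex crosses the threshold. Consequently the algorithm must also scan each $\mathcal{T}_5$ subtournament $S$ that survives the threshold, and use the $\SA_1$ conditioned triangle inequalities on $V(S)$, together with the pair variables $y_{uv}$, to select one vertex of $S$ to add to $F$. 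The selection rule must be such that the added vertex $u$ carries enough $x_u w(u)$ to be charged at ratio $7/3$; the leverage for this comes from the fact that the minimum FVS of $S$ has size two, so the conditioned constraints of $\SA_1$ on $V(S)$ combine into a stronger lower bound than the plain triangle constraints provide.

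\medskip
\noindent\emph{Main obstacle.} The principal difficulty is verifying this charging step uniformly across the three isomorphism classes of $\mathcal{T}_5$ tournaments. For each class I would enumerate the directed triangles and the induced $\SA_1$ conditioned inequalities, then exhibit a nonnegative combination of these inequalities certifying that some $u\in V(S)$ satisfies an inequality strong enough to make the charge $w(u)\leq \tfrac{7}{3}x_u w(u)$ go through (possibly using $y$-values, not just $x$). This reduces to a finite LP feasibility check for each class and should be uniform or nearly so. Once this structural lemma is in place, iterating the selection (or picking all such vertices in one pass) produces a $\mathcal{T}_5$-free residual, and combining with \textsc{CDZ} yields the claimed $w(X)\leq \tfrac{7}{3}\SA_1(T,w)\leq \tfrac{7}{3}\OPT(T,w)$ in polynomial time.
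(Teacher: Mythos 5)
Your plan diverges from the paper's in a way that cannot be repaired as written. You aim to round $\SA_1$ so that the residual tournament $T-F$ becomes $\mathcal{T}_5$-free, then solve it exactly with \textsc{CDZ}. The paper's algorithm does something weaker on purpose: it thresholds at $3/7$ to make the residual merely \emph{light} (Lemma~\ref{lem:round}), and then runs a new layering procedure (Algorithm~\ref{layersalgo}) that achieves a $9/4$-approximation on light tournaments. The reason the paper settles for lightness is that your target --- $\mathcal{T}_5$-freeness via $\SA_1$ rounding at ratio $7/3$ --- is impossible, and your own counterexample already shows why. On a light $\mathcal{T}_5$ tournament (Figure~\ref{fig:lightT5}) the uniform vector $x_v = 2/5$ is $\SA_1$-feasible (with a suitable non-uniform extension $y$; uniform $y$ does not work, but for instance $y_{ab}=2/5$, $y_{cd}=y_{ce}=y_{de}=1/5$ and all other $y$-values $0$ satisfies every instance of \eqref{ineq:SAtype1}--\eqref{ineq:SAbounds}). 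Hence such a subtournament survives any thresholding at a level $>2/5$, and if you later delete one of its vertices $u$ you incur $w(u)$ while only being able to account for $\tfrac{7}{3}x_u w(u) = \tfrac{14}{15}w(u) < w(u)$. Your ``key technical step'' --- scanning surviving $\mathcal{T}_5$ subtournaments and certifying, via conditioned triangle inequalities, that some vertex can be charged at ratio $7/3$ --- is therefore provably false: there is no such certificate for the light members of $\mathcal{T}_5$. Raising the threshold above $3/7$ is ruled out for the same reason; lowering it below $2/5$ destroys the first charging inequality. Appealing to $y$-values in the charge does not help either, since $\SA_1(T,w)$ is a linear functional of the $x$-variables only.

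Consequently the hardest part of the theorem is exactly what your proposal skips. The paper's contribution is (i) the structural notion of a \emph{light} tournament and Lemma~\ref{lem:round} establishing that this is precisely what thresholding $\SA_1$ at $3/7$ buys; (ii) a new layering algorithm (Lemmas~\ref{lem:2-in-dom_implies_Triangle-free}--\ref{lem:correct}) that handles light tournaments at ratio $9/4 \le 7/3$, using 2-in-domination and fresh starts rather than \textsc{CDZ} alone; and (iii) an intermediate iterative $\SA_0$ rounding at threshold $1/2$ (lines~\ref{line:repeat}--\ref{line:until} of Algorithm~\ref{alg:main}) together with Lemma~\ref{lem:thirdweight}, which pins $\SA_0(T')$ to $w(T')/3$ so that the $\tfrac34 w(T')$ bound of the layering phase converts cleanly into $\tfrac94 \SA_0(T')$. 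Your outline contains none of these pieces, and the charging framework you split into $w(F)\le \tfrac73\sum_{v\in F}x_v w(v)$ and $w(F')\le\sum_{v\notin F}x_v w(v)$ does not close without them.
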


\begin{algorithm}\caption{\sc{FVST}}\label{alg:main} 
\begin{algorithmic}[1]
\REQUIRE Tournament $T$ and weight function $w: V(T) \rightarrow \mathbb{Q}_{> 0}$ 
\ENSURE A feedback vertex set of $T$ of weight at most $\frac{7}{3}\OPT(T,w)$%$\OPT_{\mathbb{Z}}(T)$ 
\STATE{$x \gets $ optimal solution to $\SA_{1}(T,w)$}
\STATE{$F \gets \{v \in V(T) : x_v \geq 3/7 \}$}
\IF{$F$ is a FVS for $T$}
    \STATE{return $F$} \label{allround}
\ELSE
    \STATE{$Z \gets \varnothing$}
    \REPEAT \label{line:repeat}
        \STATE{add to $Z$ all vertices of $T-F-Z$ that are contained in no triangle}
        \STATE{$x \gets $optimal solution to $\SA_0(T-F-Z,w)$}
        \STATE{$F \gets F \cup \{v \in V(T-F-Z) : x_v \geq 1/2\}$} \label{line:round_up_half}
    \UNTIL{$T-F-Z$ is empty or $x_v < 1/2$ for all $v \in V(T-F-Z)$} \label{line:until}
    \STATE{$F' \gets$ \sc{Layers}$(T-F-Z,w,\varnothing,V(T-F-Z))$ }
    \STATE{return $F \cup F'$}\label{terminate}
\ENDIF
\end{algorithmic}
\end{algorithm}

Theorem~\ref{thm:main} proves that the integrality gap of the relaxation obtained from the basic one after one round of Sherali-Adams is always at most $7/3$. We observe that for random unweighted tournaments $(T,\mathbf{1}_T)$, letting $x_v := 3/7$ for all vertices always gives a feasible solution while the optimum value is with high probability very close to $|V(T)|$, see Corollary~\ref{cor:integrality_gap}. Thus the worst case integrality gap of $\SA_1$ is precisely $7/3$.

Precise definitions will be given later.  For now, we give a sketch of Algorithm~\ref{alg:main}, and explain how it compares with~\cite{MWV16}. 

\subsection*{Comparison to Previous Work}

Our approach simplifies both phases of Mnich \emph{et al.}'s algorithm~\cite{MWV16}. In our first phase (the rounding phase), instead of considering the $\mathcal T_7$-relaxation, we consider $\SA_{1}(T,w)$. Since the rounding phase is the bottleneck of both algorithms, we obtain a significant speedup in run-time by using a smaller LP.  Note that $\SA_{1}(T,w)$ only has $O(n^4)$ constraints, while the $\mathcal T_7$-relaxation can have $\Omega(n^7)$ constraints. Let $x$ be an optimal solution to $\SA_{1}(T,w)$.  If $x$ has a coordinate $x_v$ such that $x_v \geq 3/7$, then we may round up $x_v$ to $1$. We continue the rounding using $\SA_{0}(T,w)$ (the basic relaxation) instead, in order to make sure that when we start the second phase (the layering phase), in the residual tournament, the optimum value is at least one third of the total weight. The whole rounding is done exactly as in~\cite{MWV16}, except that we replace the $\mathcal{T}_7$-relaxation with $\SA_1(T,w)$.

Then, we proceed to the second phase. The idea follows~\cite{MWV16}, but with a few important simplifications. We start from a minimum in-degree vertex $z$ and build a breadth-first search (BFS) in-arborescence that partitions $V(T)$ in layers such that every triangle of $T$ lies within three consecutive layers. Hence, a feedback vertex set for $T$ can be obtained by including every other layer, and, for every layer $i$ that is not picked, a set $F_i$ that is a feedback vertex set for that layer (we call the set $F_i$ a \emph{local solution}). 

The main difference with the layering algorithm of \cite{MWV16} is how local solutions are selected. The layers obtained by the algorithm in \cite{MWV16} are $\mathcal{T}_5$-free. This allows them to use \textsc{CDZ$(T,w)$} as a subroutine to optimally select local solutions. 
Our algorithm implements a simpler procedure to partition $V(T)$ in layers.
For the first layer produced by the BFS procedure, consisting of all vertices that point to $z$, we also use \textsc{CDZ$(T,w)$}. However, for the subsequent layers, a different property is established. 

Such layers can be partitioned into two subtournaments, $U_i$ and $S_i$, that are both acyclic. Hence, we can choose the cheaper of the two subtournaments as our local solution $F_i$. Whenever the BFS procedure is stuck, that is, when none of the remaining vertices can reach the root node $z$, the algorithm chooses another root node and starts again (we refer to this as a \emph{fresh start}). Our method gives an improved $9/4$-approximation algorithm for FVST on our residual tournament, compared to the $7/3$ factor obtained in~\cite{MWV16}.

\subsection*{Paper Outline}
In Section~\ref{sec:Sherali-Adams}, we define the Sherali-Adams hierarchy. We introduce a local structure called a \emph{diagonal} in Section~\ref{sec:diagonals}, which will be be helpful in our rounding procedure.  We also classify every tournament as either \emph{light} or \emph{heavy}, and derive some structural properties of light tournaments.  These results will be used later, since the input of our layering algorithm is a light tournament. In Section~\ref{sec:layering}, we describe our layering procedure.  Finally, in Section~\ref{sec:algorithm}, we state Algorithm~\ref{alg:main} in full and prove its correctness. A conclusion is given in Section~\ref{sec:conclusion}.

\section{The Sherali-Adams Hierarchy} \label{sec:Sherali-Adams}
Let $P =  \{ x \in \R^n \mid A x \geq b \}$ be a polytope contained in $[0,1]^n$ and $P_I := \mathrm{conv}(P \cap \Z^n)$.  
Numerous optimization problems can be formulated as minimizing a linear function over $P_I$, where $P$ has only a polynomial number of constraints. For example, let $T$ be a tournament and $w:V(T) \to \mathbb{Q}_{\geq 0}$.  Then $\OPT(T,w)$ is simply the minimum of $w^{\intercal}x$ over $P_I$, where 
$P = P(T)$ is the basic relaxation defined above. 

The Sherali-Adams hierarchy~\cite{SA1990} is a simple but powerful method to obtain improved \emph{approximations} for $P_I$. Since it does not require any knowledge of the structure of $P_I$, it is widely applicable.  The procedure comes with a parameter $r$, which specifies the accuracy of the approximation. That is, for each $r \in \mathbb N$, we define a polytope $\SA_r(P)$. These polytopes satisfy $P = \SA_0(P) \supseteq \SA_1(P) \supseteq \dots \supseteq \SA_r(P) \supseteq \dots \supseteq P_I$.  

An important property of the procedure is that if $P$ is described by a polynomial number of constraints and $r$ is a constant, then $\SA_r(P)$ is also described by a polynomial number of constraints (in a higher dimensional space).  Therefore, for NP-hard optimization problems (such as \fvst{}), one should not expect that $\SA_r(P)=P_I$ for some constant $r$.  However, as we will see, good approximations of $P_I$ can be extremely useful if we want to \emph{approximately} optimize over $P_I$.  Indeed, despite some recent results~\cite{YZ14, levey20191+, garg2018quasi, OS19, ADFH20}, we feel that the Sherali-Adams hierarchy is underutilized in the design of approximation algorithms, and hope that our work will inspire further applications.    

Here is a formal description of the procedure.  Let $P =  \{ x \in \R^n \mid A x \geq b \} \subseteq [0,1]^n$ and $r \in \N$.  Let $N_r$ be the \emph{nonlinear} system obtained from $P$ by multiplying each constraint by $\prod_{i \in I} x_i \prod_{j \in J}(1-x_j)$ for all disjoint subsets $I, J$ of $[n]$ such that $1 \leq |I|+|J| \leq r$. Note that if $x_i \in \{0,1\}$, then $x_i^2=x_i$.  Therefore, we can obtain a \emph{linear} system $L_r$ from $N_r$ by setting $x_i^2:=x_i$ for all $i \in [n]$ and then $x_I:=\prod_{i \in I} x_i$ for all $I \subseteq [n]$ with $|I| \geq 2$.  We then let $\SA_r(P)$ be the projection of $L_r$ onto the variables $x_i$, $i \in [n]$.  

We let $\SA_r(T):=\SA_r(P(T))$, where $P(T)$ is the basic relaxation. 

For the remainder of the paper, we only need the inequalities defining $\SA_1(T)$, which we now describe.  Recall that $\triangle(T)$ is the collection of all $\{a,b,c\} \subseteq V(T)$ that induce a directed triangle in $T$. We call the elements of $\triangle(T)$ \emph{triangles}. For all $\{a,b,c\} \in \triangle(T)$ and $d \in V(T-a-b-c)$, we have the inequalities
\begin{align}
\label{ineq:SAtype1} x_a + x_b + x_c &\geq 1 + x_{ab} + x_{bc}\,,\\
\label{ineq:SAtype2} x_{ad} + x_{bd} + x_{cd} &\geq x_d \quad \text{and}\\
\label{ineq:SAtype3} x_a + x_b + x_c + x_{d} &\geq 1 + x_{ad} + x_{bd} + x_{cd}\,.
\end{align}
In addition, there are the inequalities
\begin{equation}
\label{ineq:SAbounds}
1 \geq x_{a} \geq x_{ab} \geq 0
\end{equation}
for all distinct $a, b \in V(T)$.  Let $E(T)$ be the set of all unordered pairs of vertices of $T$. The polytope $\SA_1(T)$ is the set of all $(x_a)_{a \in V(T)} \in \R^{V(T)}$ such that there exists $(x_{ab})_{ab \in E(T)} \in \R^{E(T)}$ so that inequalities \eqref{ineq:SAtype1}--\eqref{ineq:SAbounds} are satisfied.

\section{Diagonals and Light Tournaments} \label{sec:diagonals}

Let $T$ be a tournament. An (unordered) pair of vertices $ab$ is a \emph{diagonal} if there are vertices $u,v$ such that $\{u,v,a\} \in \triangle(T)$ and $\{u,v,b\} \in \triangle(T)$. We often will denote a triangle $\{a,b,c\}$ as $abc$.  We say that a triangle \emph{contains a diagonal} if at least one of its pairs of vertices is a diagonal, and a triangle is \emph{heavy} if it contains at least two diagonals.  A tournament $T$ is \emph{heavy} if at least one of its triangles is heavy. If a tournament is not heavy, we say that it is \emph{light}.

\begin{lemma}\label{lem:round}
Let $T$ be a tournament and $x \in \SA_1(T)$. If $x_v < 3/7$ for all $v \in V(T)$, % there is no $v \in V(T)$ with $x_v \geq 3/7$
then $T$ is light. 
\end{lemma}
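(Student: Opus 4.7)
I will prove the contrapositive: if $T$ is heavy, then some vertex $w$ has $x_w \geq 3/7$. So let $abc$ be a heavy triangle of $T$. Any two of the three pairs inside $\{a,b,c\}$ share a unique vertex, so after relabeling I may assume the two diagonals are $ab$ and $bc$, sharing the common vertex $b$. Pick witness pairs for both: vertices $\{u,v\}$ with $uva, uvb \in \triangle(T)$, and $\{u',v'\}$ with $u'v'b, u'v'c \in \triangle(T)$.

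The key intermediate step is to show that each witness pair contributes a lower bound of the form $x_u + x_v \geq 1 - x_{ab}$. To see this, apply the symmetric form of \eqref{ineq:SAtype1} to the triangle $uva$ in which $a$ plays the role of the ``distinguished'' vertex (so that both pair terms on the right involve $a$):
\[
x_u + x_v + x_a \;\geq\; 1 + x_{ua} + x_{va}.
\]
Then apply \eqref{ineq:SAtype2} to the triangle $uvb$ with $d := a$, which is legitimate since $a \notin \{u,v,b\}$:
\[
x_{ua} + x_{va} + x_{ab} \;\geq\; x_a.
\]
Substituting the second into the first cancels $x_a$ and $x_{ua}+x_{va}$, and yields $x_u+x_v \geq 1 - x_{ab}$. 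An identical argument applied to the diagonal $bc$ with its witness $\{u',v'\}$ gives $x_{u'} + x_{v'} \geq 1 - x_{bc}$.

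Now I combine these with \eqref{ineq:SAtype1} for the heavy triangle $abc$ itself, namely $x_a + x_b + x_c \geq 1 + x_{ab} + x_{bc}$. Adding the three inequalities, the $x_{ab}$ and $x_{bc}$ terms cancel and we are left with
\[
x_a + x_b + x_c + x_u + x_v + x_{u'} + x_{v'} \;\geq\; 3.
\]
The left-hand side is a sum of seven vertex variables, counted with multiplicity in case the witness pairs overlap with each other or with $c$. Hence it is bounded above by $7 \max_{w \in V(T)} x_w$. Under the hypothesis that every $x_w < 3/7$, the left-hand side would be strictly less than $7 \cdot 3/7 = 3$, a contradiction.

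The proof is quite short, so the main point requiring care is the bookkeeping: verifying that each invocation of \eqref{ineq:SAtype1} and \eqref{ineq:SAtype2} is applied to genuine triples of distinct vertices, and that possible overlaps among $\{u,v\}$, $\{u',v'\}$, and $\{c\}$ do not spoil the final counting. By definition, a witness pair for a diagonal $ab$ is disjoint from $\{a,b\}$, so the triangles $uva$ and $uvb$ are legitimate and the choice $d := a$ in \eqref{ineq:SAtype2} applied to $uvb$ is valid; similarly for the second diagonal. Overlaps change multiplicities in the final seven-term sum, but the upper bound $7 \max_w x_w$ still holds.
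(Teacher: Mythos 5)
Your proof is correct and follows essentially the same route as the paper: the key step, combining \eqref{ineq:SAtype1} applied to $uva$ with $a$ as the shared-subscript vertex and \eqref{ineq:SAtype2} applied to $uvb$ with $d := a$ to obtain $x_u + x_v + x_{ab} \geq 1$, is identical to the paper's derivation. The only difference is cosmetic---the paper invokes the hypothesis at this point to get $x_{ab} \geq 1/7$ for each diagonal and then substitutes into \eqref{ineq:SAtype1} for the heavy triangle, whereas you keep things symbolic, add the three inequalities so that $x_{ab}$ and $x_{bc}$ cancel, and conclude from the resulting seven-term sum (your attention to possible overlaps among the witness vertices is a reasonable precaution, though it is immaterial since the hypothesis bounds every $x_w$ individually).
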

\begin{proof}
First, let $ab$ be a diagonal of $T$. We claim that $x_{ab} \geq 1/7$.
Indeed, since $ab$ is a diagonal there must be $u,v \in V(T)$ with $uva, uvb \in \triangle(T)$. From \eqref{ineq:SAtype1}, $x_{a}+x_u + x_v \geq 1 + x_{au} + x_{av}$ and from \eqref{ineq:SAtype2}, $x_{ab} + x_{au} + x_{av} \geq x_{a}$. 
Adding these two inequalities, we obtain $x_{u} + x_{v} + x_{ab} \geq 1$, implying our claim. 

Now, suppose by contradiction that $T$ is a heavy tournament. Hence there exists $abc \in \triangle(T)$ such that $ab$ and $bc$ are diagonals. By (\ref{ineq:SAtype1}), we have $x_a + x_b + x_c \geq 1 + x_{ab} + x_{bc}$. By the above claim, $x_{ab} \geq 1/7$ and $x_{bc} \geq 1/7$, making the right hand side at least $9/7$. So $\max(x_a,x_b,x_c) \geq 3/7$, a contradiction. 
\end{proof}

Next we prove some results connecting light tournaments to the work of \cite{MWV16}, which relies on tournaments being $\mathcal{T}_7$-free.  
Of the three tournaments in $\mathcal {T}_5$, it turns out one of them is heavy (see Figure~\ref{fig:heavyT5}), while the other two are light and can be obtained from each other by reversing the orientation of one arc (see Figure~\ref{fig:lightT5}).  Moreover, although we do not use this fact, we have a computer-assisted proof which shows that 120 out of 121 of the tournaments in $\mathcal{T}_7$ are heavy, and only one is light.
Thus, even though a light tournament is not necessarily $\mathcal T_7$-free, the property of being light forbids almost all of the tournaments in $\mathcal{T}_7$ as subtournaments.

\begin{figure}[ht]
\centering
\begin{subfigure}[t]{.42\textwidth}\centering
\begin{tikzpicture}[scale=.75, inner sep=1.5pt]
\tikzstyle{vtx}=[circle,draw,thick];
\node[vtx] (a) at (-2,0) {\footnotesize $a$};
\node[vtx] (b) at (2,0) {\footnotesize $b$};
\node[vtx] (c) at (-2,-2) {\footnotesize $c$};
\node[vtx] (d) at (0,-2) {\footnotesize $d$};
\node[vtx] (e) at (2,-2) {\footnotesize $e$};

\draw[thick,->,>=latex] (e) -- (a);
\draw[thick,->,>=latex] (e) to[bend left = 50] (c);
\draw[thick,->,>=latex] (d) -- (e);
\draw[thick,->,>=latex] (b) -- (e);
\draw[thick,->,>=latex] (c) -- (a);
\draw[thick,->,>=latex] (c) -- (b);
\draw[thick,->,>=latex] (d) -- (c);
\draw[thick,->,>=latex] (a) -- (d);
\draw[thick,->,>=latex] (b) -- (d);
\end{tikzpicture}
\caption{Each orientation of $ab$ gives a light tournament in $\mathcal{T}_5$.}
\label{fig:lightT5}
\end{subfigure}
\begin{subfigure}[t]{.42\textwidth}\centering
\begin{tikzpicture}[scale=.75, inner sep=1.5pt]
\tikzstyle{vtx}=[circle,draw,thick];
\node[vtx] (aa) at (4,0) {\footnotesize $a$};
\node[vtx] (bb) at (8,0) {\footnotesize $b$};
\node[vtx] (cc) at (4,-2) {\footnotesize $c$};
\node[vtx] (dd) at (6,-2) {\footnotesize $d$};
\node[vtx] (ee) at (8,-2) {\footnotesize $e$};

\draw[thick,->,>=latex] (aa) -- (bb);
\draw[thick,->,>=latex] (ee) to[bend left = 50] (cc);
\draw[thick,->,>=latex] (dd) -- (ee);
\draw[thick,->,>=latex] (bb) -- (ee);
\draw[thick,->,>=latex] (cc) -- (aa);
\draw[thick,->,>=latex] (bb) -- (cc);
\draw[thick,->,>=latex] (cc) -- (dd);
\draw[thick,->,>=latex] (dd) -- (aa);
\draw[thick,->,>=latex] (bb) -- (dd);
\draw[thick,->,>=latex] (ee) -- (aa);
%\draw[thick] (2,2) -- (2.75,3);
\end{tikzpicture}
\caption{The unique heavy tournament in $\mathcal{T}_5$. Note that triangle $dec$ is heavy.}
\label{fig:heavyT5}
\end{subfigure}
\caption{The three tournaments in $\mathcal{T}_5$.}
\end{figure}
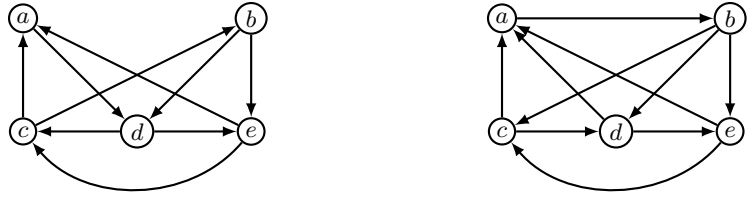

We now establish further properties of light tournaments.  Let $\mathcal S_5 \subseteq \mathcal T_5$ and $\mathcal S_7 \subseteq \mathcal T_7$ be the collection of tournaments defined in Figures~\ref{fig:S5} and~\ref{fig:S7}, respectively. If $T$ is a tournament, we let $A(T)$ be the set of arcs of $T$.

\begin{lemma}
\label{lem:S_5}
Every $S \in \mathcal{S}_5$ is either heavy or has $(u_i,u_{3-i}), (v_i,v_{3-i}) \in A(S)$ for some $i \in [2]$ (where $S$ is labelled as in Figure~\ref{fig:S5}).  
\end{lemma}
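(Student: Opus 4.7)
My plan is a direct case analysis on the two arcs $u_1u_2$ and $v_1v_2$ of $S$. I will treat the defining diagram of $\mathcal{S}_5$ (Figure~\ref{fig:S5}) as fixing every arc of $S$ except possibly those between $u_1,u_2$ and between $v_1,v_2$, so that $S$ is determined up to the orientations of these two pairs. There are four resulting orientations. Two of them—namely the ones where $(u_1,u_2)$ and $(v_1,v_2)$ both belong to $A(S)$, or $(u_2,u_1)$ and $(v_2,v_1)$ both belong to $A(S)$—give the second alternative of the lemma directly (with $i=1$ or $i=2$), so there is nothing to prove. The remaining two orientations are mismatched, and by the evident symmetry swapping the roles of the $u$-pair and the $v$-pair they are interchangeable; hence it suffices to treat one of them and show that in that case $S$ is heavy.

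In that remaining case, I will exhibit a single triangle $abc$ of $S$ that contains at least two diagonals, which by definition makes $S$ heavy. To produce a diagonal witness for a pair $pq$, I need to name two vertices $r,s$ with $rsp, rsq \in \triangle(S)$. Since $|V(S)|=5$, for any candidate pair there are only $\binom{3}{2}=3$ candidate choices of $\{r,s\}$, and the arcs among them are all prescribed by Figure~\ref{fig:S5} together with the fixed orientations of $u_1u_2$ and $v_1v_2$, so each check reduces to reading off a handful of arcs. I plan to take $abc$ to be a triangle that uses the fifth vertex $w \notin \{u_1,u_2,v_1,v_2\}$ together with one vertex from each of the $u$- and $v$-pairs, since the symmetry between the two pairs in Figure~\ref{fig:S5} then lets me produce two diagonal witnesses by the same pattern applied on each side.

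The main obstacle is bookkeeping: even though the argument is morally one observation applied twice (once for the $u$-side and once for the $v$-side), writing it out requires naming up to four auxiliary triangles and verifying their orientations against the picture. I expect the cleanest presentation to be to first record once and for all, from Figure~\ref{fig:S5}, the list of triangles of $S$ containing each of $u_1,u_2,v_1,v_2,w$ (which is short), and then to read off both diagonal certificates from that list in the mismatched case. No SA constraints are needed here—the argument is purely combinatorial and uses only the definition of ``heavy'' from Section~\ref{sec:diagonals}.
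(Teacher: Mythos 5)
Your proposal takes essentially the same approach as the paper: a case analysis on the orientations of the two free arcs, dismissing the two matched cases trivially, reducing by symmetry to a single mismatched case, and exhibiting a heavy triangle through $z$ and one vertex from each of the $u$- and $v$-pairs (the paper picks $v_2u_2z$, with diagonals $zv_2$ and $v_2u_2$ both certified by the same witness pair $\{v_1,u_1\}$, matching your plan of reusing one pattern on each side). One small correction: the symmetry interchanging the two mismatched cases is the index swap $u_1 \leftrightarrow u_2$, $v_1 \leftrightarrow v_2$, which is an automorphism of the fixed arcs in Figure~\ref{fig:S5}; a swap of the $u$-pair with the $v$-pair does not preserve the fixed arc directions (e.g.\ arcs go from $u_i$ into $z$ but from $z$ into $v_i$), so be careful when writing this step out.
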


\begin{proof}
Suppose $(u_1,u_2), (v_2,v_1) \in A(S)$. Observe that $zv_2$ is a diagonal since $v_1u_1z$ and $v_1u_1v_2$ are triangles, and $v_2u_2$ is a diagonal since $v_1u_1v_2$ and $v_1u_1u_2$ are triangles. Because $zv_2$ and $v_2u_2$ are both diagonals, we conclude that the triangle $v_2u_2z$ is heavy. The result follows by symmetry.
\end{proof}

\begin{figure}[H]
    \centering
    \begin{tikzpicture}[scale=.75,inner sep=1.5pt]
    \tikzstyle{vtx}=[circle,draw,thick];
    \node[vtx] (z) at (0,1.5) {\footnotesize $z$};
    \node[vtx] (u1) at (-1,0) {\footnotesize $u_1$};
    \node[vtx] (u2) at (1,0) {\footnotesize $u_2$};
    \node[vtx] (v1) at (-1,-2) {\footnotesize $v_1$};
    \node[vtx] (v2) at (1,-2) {\footnotesize $v_2$};
    \draw[thick,->,>=latex] (u1) -- (z);
    \draw[thick,->,>=latex] (u2) -- (z);
    \draw[thick,->,>=latex] (v1) -- (u1);
    \draw[thick,->,>=latex] (v2) -- (u2);
    %\draw[thick,->,>=latex] (z) to[bend left=100] (v3);
    \draw[thick,->,>=latex] (z) to[bend right=75] (v1);
    \draw[thick,->,>=latex] (z) to[bend left=75] (v2);
    \draw[thick,->,>=latex] (u1) -- (v2);
    \draw[thick,->,>=latex] (u2) -- (v1);
    %\draw[thick] (2,2) -- (2.75,3);
    \end{tikzpicture}
    \caption{$\mathcal{S}_5$ is the following subset of $\mathcal{T}_5$, where the missing arcs can be oriented arbitrarily.}
    \label{fig:S5}
\end{figure}
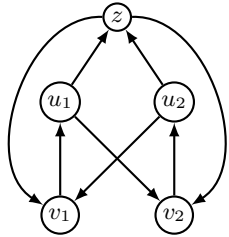

\begin{lemma} \label{lem:S_7}
Every $S \in \mathcal{S}_7$ is heavy.
\end{lemma}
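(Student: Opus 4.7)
The plan is to mimic the strategy of Lemma~\ref{lem:S_5}: I want to exhibit, for every $S \in \mathcal{S}_7$, a triangle of $S$ that carries two diagonals, independently of the orientations of the unspecified arcs. The main tool is the two-triangle principle used implicitly in Lemma~\ref{lem:S_5}: whenever two forced triangles $xya$ and $xyb$ coexist in $S$, the pair $ab$ is automatically a diagonal of $S$.

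First I would enumerate the triangles of $S$ whose three arcs are fixed by Figure~\ref{fig:S7} (those coming from the BFS-style backbone rooted at $z$ together with the forced arcs between adjacent layers). Each pair of such forced triangles sharing an edge certifies one diagonal. The hope, as in Lemma~\ref{lem:S_5}, is that this list of certified diagonals is already dense enough that some forced triangle is automatically incident to two of them, yielding the desired heavy triangle without even touching the unspecified arcs.

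Where direct inspection is awkward I would reduce to the previous lemma. Inside any $S \in \mathcal{S}_7$ I would locate a $5$-vertex subtournament $S'$ belonging to $\mathcal{S}_5$. Lemma~\ref{lem:S_5} gives two outcomes: either $S'$ is heavy, in which case $S$ inherits a heavy triangle immediately, or the sibling arcs of $S'$ are oriented in the exceptional pattern $(u_i,u_{3-i}), (v_i,v_{3-i})$. In the exceptional case, the two extra vertices of $S$ together with the fixed arcs of Figure~\ref{fig:S7} should provide the additional forced triangle that turns one of the certified diagonals into a second diagonal on a common triangle, closing the case.

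The main obstacle I anticipate is case explosion: a tournament in $\mathcal{S}_7$ has strictly more unspecified arcs than one in $\mathcal{S}_5$, so a brute-force split balloons. The key simplification is presumably already built into the definition of $\mathcal{S}_7$, which is almost certainly chosen so as to exclude the non-heavy exceptional configurations of $\mathcal{S}_5$; the work will lie in verifying this and in organising the case analysis around copies of $\mathcal{S}_5$ found inside $S$ so that the argument remains concise.
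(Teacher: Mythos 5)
Your proposal lands on the same strategy the paper uses, in particular your fallback route of locating embedded copies of $\mathcal{S}_5$ inside $S$ and invoking Lemma~\ref{lem:S_5} is exactly what the paper does. One remark needs correcting, though: you suggest that the definition of $\mathcal{S}_7$ is ``almost certainly chosen so as to exclude the non-heavy exceptional configurations of $\mathcal{S}_5$.'' That is not what happens. In a light $S\in\mathcal{S}_7$ the exceptional configurations of $\mathcal{S}_5$ not only occur, they are \emph{forced} on every embedded copy, and the point of the proof is that two such copies overlap. Concretely: by symmetry one may assume $(u_1,u_2),(u_2,u_3)\in A(S)$; applying Lemma~\ref{lem:S_5} to the copies of $\mathcal{S}_5$ on $\{z,u_1,u_2,v_1,v_2\}$ and on $\{z,u_2,u_3,v_2,v_3\}$, lightness forces the exceptional orientations $(v_1,v_2),(v_2,v_3)\in A(S)$. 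Then $u_2z$ is a diagonal (via triangles $v_1u_1z$ and $v_1u_1u_2$) and $zv_2$ is a diagonal (via $v_3u_3z$ and $v_3u_3v_2$), so the fixed triangle $v_2u_2z$ is heavy. Also, the case explosion you worry about does not materialize: a single symmetry reduction (choosing $u_2$ to be the ``middle'' vertex of the $u$-orientation) handles all of $\mathcal{S}_7$ at once, and your plan A (looking for a heavy triangle among fully-forced arcs only) cannot succeed, since the heavy triangle ultimately produced depends on the orientations that Lemma~\ref{lem:S_5} pins down.
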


\begin{proof}
Suppose some $S \in \mathcal{S}_7$ is light, where $S$ is labelled as in Figure~\ref{fig:S7}. By symmetry, we may assume that $(u_1,u_2), (u_2,u_3) \in A(S)$. By Lemma~\ref{lem:S_5}, $(v_1,v_2), (v_2,v_3) \in A(S)$. Therefore, $u_2z$ is a diagonal since $v_1u_1z$ and $v_1u_1u_2$ are triangles, and $zv_2$ is a diagonal since $v_3u_3z$ and $v_3u_3v_2$ are triangles. We conclude that $v_2u_2z$ is a heavy triangle, which contradicts that $S$ is light.   
\end{proof}

  \begin{figure}[H]
    \centering
    \begin{tikzpicture}[scale=.75,inner sep=1.5pt]
    \tikzstyle{vtx}=[circle,draw,thick];
    \node[vtx] (z) at (0,2) {\footnotesize $z$};
    \node[vtx] (u2) at (0,0) {\footnotesize $u_2$};
    \node[vtx] (u3) at (3,0) {\footnotesize $u_3$};
    \node[vtx] (u1) at (-3,0) {\footnotesize $u_1$};
    \node[vtx] (v2) at (0,-3) {\footnotesize $v_2$};
    \node[vtx] (v3) at (3,-3) {\footnotesize $v_3$};
    \node[vtx] (v1) at (-3,-3) {\footnotesize $v_1$};
    \draw[thick,->,>=latex] (u1) -- (z);
    \draw[thick,->,>=latex] (u2) -- (z);
    \draw[thick,->,>=latex] (u3) -- (z);
    \draw[thick,->,>=latex] (v1) -- (u1);
    \draw[thick,->,>=latex] (v2) -- (u2);
    \draw[thick,->,>=latex] (v3) -- (u3);
    \draw[thick,->,>=latex] (z) to[bend left=100] (v3);
    \draw[thick,->,>=latex] (z) to[bend right=100] (v1);
    \draw[thick,->,>=latex] (z) to[bend right] (v2);
    \draw[thick,->,>=latex] (u1) -- (v3);
    \draw[thick,->,>=latex] (u1) -- (v2);
    \draw[thick,->,>=latex] (u2) -- (v1);
    \draw[thick,->,>=latex] (u2) -- (v3);
    \draw[thick,->,>=latex] (u3) -- (v1);
    \draw[thick,->,>=latex] (u3) -- (v2);
    %\draw[thick] (2,2) -- (2.75,3);
    \end{tikzpicture}
    \caption{$\mathcal{S}_7$ is the following subset of $\mathcal{T}_7$, where the missing arcs can be oriented arbitrarily.}
    \label{fig:S7}
    \end{figure}
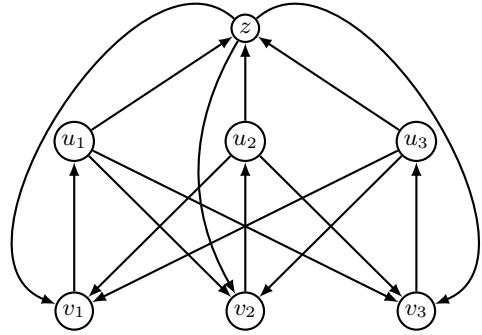

\section{The Layering Procedure}\label{sec:layering}
This section proves the correctness of our layering algorithm, see Algorithm~\ref{layersalgo} below. Lemmas \ref{lem:2-in-dom_implies_Triangle-free} to \ref{lem:Ui-no-T5} ensure that the algorithm actually produces a feedback vertex set.
Lemmas \ref{lem:halfweight} to \ref{lem:correct} prove that Algorithm~\ref{layersalgo} is a $9/4$-approximation algorithm.  

 Let $T$ be a light tournament with weight function $w:  V(T) \rightarrow \mathbb{Q}_{\geq 0}$. For $S \subseteq V(T)$, the \emph{in-neighborhood} of $S$ is $N(S) := \{ v \notin S \mid \text{ $(v,u) \in A(T)$ for some $u \in S$}\}$ and $N(u) := N(\{u\})$. For every $z \in V(T)$, define $V_1(z) =\{z\}$, and for $i \geq 2$ let $V_{i+1}(z):=N(\bigcup_{j \in [i]} V_j(z))$.
In other words $V_i(z)$ is the set of vertices whose shortest directed path to $z$ has length exactly $i-1$.

Given two sets $S,Z \subseteq V(T)$, we say that $Z$ \emph{in-dominates} $S$ 
if for every $s \in S$ there is a $z \in Z$ with $(s,z) \in A(T)$. We say that $Z$ \emph{2-in-dominates} $S$ if $Z$ has a subset $Z' \subseteq Z$ with $|Z'| \leq 2$ such that $Z'$ in-dominates $S$. 

We start with a lemma that is key to both the correctness and the performance guarantee of Algorithm \ref{layersalgo}.
\begin{lemma} \label{lem:2-in-dom_implies_Triangle-free}
Let $T$ be a light tournament, $z$ be any vertex of $T$, and $i \geq 3$. If $V_i(z)$ is $2$-in-dominated by $\{z_{i-1},z_{i-1}'\} \subseteq V_{i-1}(z)$ (possibly $z_{i-1}=z_{i-1}'$), then $U := N(z_{i-1}) \cap V_{i}(z)$ and $S :=V_i(z) - U$ are triangle-free.  
\end{lemma}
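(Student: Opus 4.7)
The plan is to prove both $U$ and $S$ triangle-free by the same contradiction scheme: suppose we have a directed triangle $abc$ lying in $U$ (resp.\ in $S$), and exhibit a common pair $\{y, z_{i-1}\}$ (resp.\ $\{y', z_{i-1}'\}$) of vertices such that $\{y, z_{i-1}, a\}$, $\{y, z_{i-1}, b\}$, and $\{y, z_{i-1}, c\}$ are all directed triangles. This will force all three pairs $ab$, $bc$, $ca$ to be diagonals, making $abc$ a heavy triangle and contradicting the lightness of $T$.

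First I would record two elementary facts about the BFS layering. Since $i \geq 3$ and $z_{i-1} \in V_{i-1}(z)$, by definition of the layers $z_{i-1}$ has at least one out-arc into $V_1(z) \cup \cdots \cup V_{i-2}(z)$, and the head $y$ of any such out-arc can be taken in $V_{i-2}(z)$ (any shorter layer would likewise force $z_{i-1}$ to lie in an earlier layer). Second, no vertex $v \in V_i(z)$ can have an out-arc to a vertex $y' \in V_j(z)$ with $j \leq i-2$, for otherwise $v$ would admit a path of length $\leq i-2$ to $z$, contradicting $v \in V_i(z)$. In the tournament, this forces $(y, a), (y, b), (y, c) \in A(T)$ whenever $a,b,c \in V_i(z)$ and $y \in V_{i-2}(z)$.

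Now for the case $U$: by definition of $U$, we have arcs $(a, z_{i-1})$, $(b, z_{i-1})$, $(c, z_{i-1})$, and by the previous paragraph $(z_{i-1}, y)$ and $(y, a), (y, b), (y, c)$ are all arcs of $T$. Thus each of $\{y, z_{i-1}, a\}$, $\{y, z_{i-1}, b\}$, $\{y, z_{i-1}, c\}$ is a directed triangle. Hence $ab$, $bc$, and $ca$ are all diagonals (each witnessed by the pair $\{y, z_{i-1}\}$), and the triangle $abc$ is heavy, contradicting that $T$ is light. For $S$, we may assume $z_{i-1}' \neq z_{i-1}$ (otherwise $S = \emptyset$ and there is nothing to prove); then by the $2$-in-dominating property every $s \in S$ satisfies $(s, z_{i-1}') \in A(T)$, and the identical argument with $z_{i-1}'$ in place of $z_{i-1}$ and a corresponding $y' \in V_{i-2}(z)$ with $(z_{i-1}', y') \in A(T)$ yields a heavy triangle $abc$, again a contradiction.

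The main obstacle is purely bookkeeping: one has to verify that $y, z_{i-1}, a$ (and analogously for $b, c$) really are three distinct vertices forming a directed $3$-cycle and not, say, a transitive triangle. Distinctness is immediate because $y, z_{i-1}, a$ lie in pairwise different layers $V_{i-2}(z), V_{i-1}(z), V_i(z)$, and cyclicity follows from the orientation $(y \to a \to z_{i-1} \to y)$ forced by the layering and the defining property of $U$ (resp.\ $S$). Once these orientations are pinned down, the conclusion is immediate from the definitions of diagonal, heavy triangle, and light tournament.
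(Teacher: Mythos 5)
Your proposal is correct and follows essentially the same approach as the paper: find a vertex $r$ (your $y$) in $V_{i-2}(z)$ that $z_{i-1}$ points to, observe that $r$ must point to every vertex of $V_i(z)$ by the layer structure, conclude that all three pairs of the supposed triangle in $U$ are diagonals witnessed by $\{r, z_{i-1}\}$, and repeat with $z_{i-1}'$ for $S$. Your write-up is a bit more explicit than the paper's terse "repeat the same argument" for $S$, but the underlying argument is identical.
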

\begin{proof}
%Let $U := N(z_{i-1}) \cap V_i(z)$ and $S := V_i(z) - U$.
Suppose by contradiction that $u_1u_2u_3$ is a triangle in $U$. Since  $z_{i-1} \in V_{i-1}(z)$ and $i \geq 3$, we have $(z_{i-1},r) \in A(T)$ for some $r \in V_{i-2}(z)$. Since $U \subseteq V_{i}(z)$, arcs $(r,u_1),(r,u_2),(r,u_3) \in A(T)$. Thus, $ru_iz_{i-1}$ is a triangle  for all $i \in [3]$. It follows that the triangle $u_1u_2u_3$ is heavy since all of its arcs are diagonals, a contradiction. If $S$ has a triangle, we can repeat the same argument. 
\end{proof}

\begin{algorithm}[H]\caption{\textsc{Layers}$(T,w, U_i, W$)}
\label{layersalgo}
\begin{algorithmic}[1]
\REQUIRE $T$ is a light tournament, $w:V(T) \to \Q_{\geq 0}$, $U_i$ is the current root layer, and $W$ is the set of unseen vertices ($U_0:=\varnothing$ and $W:=V(T)$ on the first call). We assume all objects that depend on $i$ (including $i$ itself) to be available throughout subsequent recursive calls.
\ENSURE A feedback vertex set $F'$ of $T$ of weight at most $\frac{3}{4}  w(T)$
\
\IF[Finished]{$W = \varnothing$}
    \STATE{$L_0 \gets \cup_{j\ \mathrm{even}} U_{j} \cup S_{j}$, $L_1 \gets \cup_{j\ \mathrm{odd}} U_{j} \cup S_{j}$}
    \STATE{$F' \gets (\cup_{j=1}^{i}F_{2j}) \cup L_1$ if $w(L_0) \geq w(L_1)$ otherwise $(\cup_{j=0}^{i-1}F_{2j+1}) \cup L_0$}
    \STATE{return $F'$}
\ENDIF
\IF{$N(U_i) \neq \varnothing$}
  \STATE{$\{z_{i},z_{i}'\} \gets $2-in-dominates$(N(U_i))$ with $w(N(z_{i}) \cap W) \geq w(N(z_{i}') \cap W)$}\label{line:split}
    \STATE{$U_{i+1} \gets N(z_{i}) \cap W$, $S_{i+1} \gets N(z_{i}') \cap W - U_{i+1}, W \gets W - U_{i+1} - S_{i+1}$}\label{line:s_i}
    \STATE{$F_{i+1} = S_{i+1}$}\label{line:f_i}
    \STATE{$i \gets i+1$}
    \STATE{return \sc{Layers}$(T, w, U_{i+1},W)$}
\ELSE[Fresh Start]
    \STATE{$z_{i+1} \gets$ choose $z \in W$ with $|N(z) \cap W|$ minimum}
    \STATE{$U_{i+1} \gets \{z_{i+1}\}$ , $U_{i+2} \gets N(z_{i+1}) \cap W, S_{i+1} \gets \varnothing$}\label{line:ui}
    \STATE{$F_{i+1} \gets \varnothing$}
    \STATE{$F_{i+2} \gets $ \sc{CDZ$(U_{i+2},w)$}}\label{line:freshstart}
    \STATE{$W \gets W - (U_{i+1} \cup U_{i+2})$}
    \STATE{$i \gets i+2$}
    \STATE{return \sc{Layers}$(T, w, U_{i+2},W)$}
\ENDIF  

\end{algorithmic}
\end{algorithm}

The next lemma ensures that, in the step following a fresh start, vertices $z_{i}, z'_{i}$ as on line \ref{line:split} of Algorithm \ref{layersalgo} exist.
\begin{lemma}\label{twodominatesthree}
For an arbitrary vertex $z$ in a light tournament $T$, $V_3(z)$ is 2-in-dominated by $V_2(z)$.
\end{lemma}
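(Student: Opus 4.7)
The plan is to argue by contradiction: assuming $V_3(z)$ is not $2$-in-dominated by $V_2(z)$, I would exhibit an $\mathcal{S}_7$-subtournament of $T$ on a $7$-vertex set containing $z$; Lemma~\ref{lem:S_7} then yields a heavy triangle in $T$, contradicting the hypothesis that $T$ is light. For $u \in V_2(z)$, it is convenient to write $N_H(u) := \{v \in V_3(z) : (v,u) \in A(T)\}$, so that a subset $Z' \subseteq V_2(z)$ in-dominates $V_3(z)$ precisely when $\bigcup_{u \in Z'} N_H(u) = V_3(z)$. The degenerate case $|V_2(z)| \leq 1$ is immediate (if $V_2(z) = \{u\}$, then every $v \in V_3(z)$ must already point to $u$), so I would assume $|V_2(z)| \geq 2$.

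The first step is to choose distinct $u_1, u_2 \in V_2(z)$ maximizing $|N_H(u_1) \cup N_H(u_2)|$. The standing assumption forces this maximum to be strictly smaller than $|V_3(z)|$, so there exists $v_3 \in V_3(z) \setminus (N_H(u_1) \cup N_H(u_2))$. Since $v_3 \in V_3(z)$, I can pick $u_3 \in V_2(z)$ with $(v_3,u_3) \in A(T)$; by construction $u_3 \notin \{u_1,u_2\}$.

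The crucial step, and the main obstacle, is to produce $v_1 \in N_H(u_1) \setminus (N_H(u_2) \cup N_H(u_3))$ and $v_2 \in N_H(u_2) \setminus (N_H(u_1) \cup N_H(u_3))$. If $v_2$ did not exist, then $N_H(u_2) \setminus N_H(u_1) \subseteq N_H(u_3)$; combined with $v_3 \in N_H(u_3) \setminus (N_H(u_1) \cup N_H(u_2))$ (which in particular lies in $N_H(u_3) \setminus N_H(u_1)$ but not in $N_H(u_2) \setminus N_H(u_1)$), this gives
$$|N_H(u_3) \setminus N_H(u_1)| \geq |N_H(u_2) \setminus N_H(u_1)| + 1,$$
whence $|N_H(u_1) \cup N_H(u_3)| > |N_H(u_1) \cup N_H(u_2)|$, contradicting the maximal choice of $\{u_1,u_2\}$. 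The existence of $v_1$ follows from the same counting with the roles of $u_1$ and $u_2$ swapped. This is the heart of the argument and is where the maximality of $\{u_1,u_2\}$, together with the extra vertex $v_3$ supplied by the failure of $2$-in-domination, is essential.

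Finally, the vertices $v_1, v_2, v_3$ lie in pairwise disjoint subsets of $V_3(z)$ and hence are distinct, and $u_1, u_2, u_3$ are distinct by construction. The defining arcs of $\mathcal{S}_7$ can then be read off directly: $u_i \to z$ since $u_i \in V_2(z) = N(z)$; $z \to v_i$ since $v_i \in V_3(z)$; $v_i \to u_i$ since $v_i \in N_H(u_i)$; and $u_j \to v_i$ for $i \neq j$ since $v_i \notin N_H(u_j)$. Hence $T[\{z, u_1, u_2, u_3, v_1, v_2, v_3\}]$ is a member of $\mathcal{S}_7$, which by Lemma~\ref{lem:S_7} is heavy, contradicting the lightness of $T$ and completing the proof.
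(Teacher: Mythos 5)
Your proof is correct, but it takes a genuinely different route from the paper's. The paper chooses an inclusion-wise \emph{minimal} in-dominating subset $H \subseteq V_2(z)$ of $V_3(z)$ and observes that if $|H| \geq 3$, then minimality automatically provides each $h_i \in H$ with a ``private'' vertex $v_i \in V_3(z)$ dominated by $h_i$ alone among $H$; three of these immediately assemble into an $\mathcal{S}_7$-subtournament together with $z$. You instead choose a pair $\{u_1,u_2\}$ \emph{maximizing} $|N_H(u_1) \cup N_H(u_2)|$, use the failure of $2$-in-domination to find $v_3$ and $u_3$, and then recover the private vertices $v_1, v_2$ by a cardinality argument that exploits the maximal choice of the pair. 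The net effect is the same: three vertices of $V_2(z)$ each with a private in-neighbor in $V_3(z)$, hence an $\mathcal{S}_7$-subtournament and a contradiction via Lemma~\ref{lem:S_7}. The paper's minimal-set argument is shorter because minimality hands you all three private vertices in one stroke, with no counting; your maximal-pair argument has the minor virtue of being phrased directly in terms of the property being negated ($2$-in-domination) rather than passing through arbitrary in-dominating sets, but it pays for this with the extra inclusion-exclusion bookkeeping in the ``crucial step.'' Both proofs are valid and both ultimately rest on Lemma~\ref{lem:S_7}.
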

\begin{proof}
Let $H = \{h_1,h_2,...,h_k\} \subseteq V_2(z)$ be an inclusion-wise minimal set that in-dominates $V_3(z)$. Suppose $k \geq 3$. By minimality, for each $h_i \in H$ there must be some $v_i \in V_3(z)$ such that $(v_i,h_i) \in A(T)$ and $(h_i,v_j) \in A(T)$ for all $j \neq i$. Since $(z,v_i) \in A(T)$ for all $i$, it follows that $T[\{z,h_1,h_2,h_3,v_1,v_2,v_3\}]$ is isomorphic to a tournament in $\mathcal S_7$ (see Figure~\ref{fig:S7}). Therefore, by Lemma~\ref{lem:S_7}, $T[\{z,h_1,h_2,h_3,v_1,v_2,v_3\}]$ is heavy, which contradicts that $T$ is light.      
\end{proof}

The next lemma ensures that the layer produced after a fresh start is $\mathcal{T}_5$-free, allowing us to use the exact algorithm from~\cite{CDZ01}. Its proof follows the proof of Lemma 9 of \cite{MWV16}, except that we assume that $T$ is light.

\begin{lemma} \label{lem:H}
Let $z$ be a minimum in-degree vertex in a light tournament $T$. Then $V_2(z)$ is $\mathcal{T}_5$-free.
\end{lemma}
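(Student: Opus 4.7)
Suppose, for contradiction, that $V_2(z)$ contains a $\mathcal{T}_5$-subtournament $S$. Since every subtournament of a light tournament is light (a heavy triangle of $S$ would remain heavy in $T$), $S$ must be one of the two tournaments in Figure~\ref{fig:lightT5}. My plan is to derive a contradiction by exhibiting an $\mathcal{S}_7$-subtournament of $T$, which is heavy by Lemma~\ref{lem:S_7}.

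Let $O := N^+(z)$ and, for each $v \in S$, set $N_v := N^-(v) \cap O$. Using $d^-(v) \geq d^-(z) = |V_2(z)|$ together with the fact that at most $|V_2(z)| - 5 + d^-_S(v)$ of the in-neighbors of $v$ lie in $V_2(z)$, I obtain $|N_v| \geq 5 - d^-_S(v) \geq 1$; in particular every $N_v$ is nonempty. The key observation is that whenever $o \in N_u \cap N_{u'}$ for distinct $u, u' \in S$, both $\{z, o, u\}$ and $\{z, o, u'\}$ are triangles of $T$, so $\{z, o\}$ witnesses $\{u, u'\}$ as a diagonal of $T$.

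The main technical step is to show that the five sets $N_v$ (for $v \in S$) are pairwise disjoint. Granted this, I pick any three $u_1, u_2, u_3 \in S$ and distinct $v_i \in N_{u_i}$: the arcs $u_i \to z$ and $z \to v_i$ are automatic, $v_i \to u_i$ holds since $v_i \in N_{u_i}$, and $u_j \to v_i$ for $j \neq i$ follows from $v_i \notin N_{u_j}$. Hence $\{z, u_1, u_2, u_3, v_1, v_2, v_3\}$ induces a tournament in $\mathcal{S}_7$, which by Lemma~\ref{lem:S_7} is heavy, contradicting that $T$ is light.

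To establish pairwise disjointness I iterate two ingredients. First, lightness applied to each cyclic triangle of $S$ (which can carry at most one diagonal of $T$) combined with the key observation directly forces $N_u \cap N_{u'} = \emptyset$ for several pairs, kickstarting the argument. Second, for every arc $u \to u'$ of $S$ with $N_u \cap N_{u'} = \emptyset$, the set $\{z, u, u', v_1, v_2\}$ (for any $v_1 \in N_u$, $v_2 \in N_{u'}$) induces an $\mathcal{S}_5$, so Lemma~\ref{lem:S_5} combined with lightness forces $v_1 \to v_2$; equivalently, $N_u \to N_{u'}$ completely. The main obstacle is the bookkeeping in the bootstrap: once we have $N_u \to N_x$ and $N_x \to N_{u'}$ for some intermediate $x \in S$, any $o \in N_u \cap N_{u'}$ would both point to and receive arcs from every vertex of the nonempty $N_x$, which is impossible. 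Iterating these derivations along arcs of the strongly connected tournament $S$ propagates disjointness until all ten pairs $\{u,u'\}\subseteq S$ are covered, completing the argument.
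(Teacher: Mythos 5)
Your proof is correct, and it takes a genuinely different route from the paper's. The paper argues via a minimal set $H\subseteq V_3(z)$ in-dominating $V_2(z)$ and splits into cases $|H|=1$, $|H|=2$, $|H|\ge 3$; the $|H|=2$ case requires a somewhat delicate orientation analysis (Figure~\ref{fig:lemH}). You instead work with the local pull-back sets $N_v=N^-(v)\cap N^+(z)$, obtain $N_v\neq\varnothing$ from a degree count rather than the paper's simpler minimality observation, and then build the $\mathcal{S}_7$ directly by finding three pairwise-disjoint $N_v$'s. This unifies all three cases of the paper into one argument and is arguably cleaner: the paper's Case~3 is exactly the situation where the $\mathcal{S}_7$ drops out trivially, and your bootstrap supplies the contradiction precisely in the situations that the paper handles by hand in Cases~1 and~2.

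One remark on the exposition: you say the "main technical step" is showing \emph{all} five $N_v$ are pairwise disjoint, but your final construction only needs \emph{three} of them; starting from, say, $N_a\cap N_d=\varnothing$, $N_a\cap N_e=\varnothing$ (forced because $de$ is already a diagonal inside $T[S]$, so $ad$ and $ae$ cannot be diagonals of $T$), the single transitivity step $N_e\to N_a\to N_d$ already gives $N_d\cap N_e=\varnothing$ and you are done. Your stronger claim about all ten pairs does in fact hold, and the reason is a pleasant structural fact you leave implicit: in the light $\mathcal{T}_5$ of Figure~\ref{fig:lightT5}, the five arcs whose endpoints are \emph{not} diagonals of $T[S]$ form a Hamiltonian directed cycle (for instance $a\to d\to c\to b\to e\to a$), so each of the five remaining pairs is reachable by exactly one two-hop chain $N_u\to N_x\to N_{u'}$ and the bootstrap closes in one round. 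Making this explicit would replace the slightly hand-wavy "iterate until all ten pairs are covered" with a short verification, and would be worth doing if you wrote this up; but the underlying argument is sound.
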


\begin{proof}
 We assume $V_2(z) \neq \varnothing$, otherwise there is nothing to show, and we suppose by contradiction that $X \subseteq V_2(z)$ is a light $\mathcal{T}_5$  ($X$ cannot be heavy as $T$ is light, so $X$ is oriented as in Figure~\ref{fig:lightT5}). For every $u \in V_2(z)$ there must be a $v \in V_3(z)$ with $(v,u) \in A(T)$. 
 If not then $N(u) \subsetneq V_2(z) = N(z)$, contradicting the minimality of $|N(z)|$. Thus $V_3(z) \neq \varnothing$. Let $H \subseteq V_3(z)$ be an inclusion-wise minimal subset of $V_3(z)$ such that for every $u \in V_{2}(z)$ there exists $v \in H$ with $(v,u) \in A(T)$. We distinguish cases according to the size of $H$.\medskip
 
\noindent \emph{Case 1: $H = \{h\}$}. Then $hu_iz$ are triangles for all $u_i \in X$, therefore all arcs in $X$ are diagonals. Since $X$ must contain at least some triangle, this triangle must be heavy since all of its arcs are diagonals, contradicting the fact that $T$ is light.\medskip

\noindent \emph{Case 2: $H = \{f,h\}$}. Let $X = \{a,b,c, d,e\}$.
We can assume without loss of generality that $f$ points to exactly three vertices of $X$, for the following reason. If there are less than three, we can swap $h$ with $f$. If there are more than three, then $f$ must point to a triangle of $X$ (since $T[X]$ is a $\mathcal{T}_5$-subtournament), which would be heavy, arguing as in Case~1. 

Notice that $ed$ and $ec$ are diagonals within $X$ (due to triangles $ade$ and $adc$, $bdc$ and $bec$, respectively), hence none of $ad, ae, bc, be$ can be diagonals, otherwise one of $ade$ or $cbe$ will be a heavy triangle. This implies that $f$ cannot point to both vertices of any of the latter pairs. From this, one easily derives that $f$ cannot point to $a$ nor $b$. Hence,  $(a,f)$, $(b,f)$, $(f,d)$, $(f,e)$, $(f,c) \in A(T)$, 
 which implies $(h,a), (h,b) \in A(T)$. This forces $(e,h),(c,h),(d,h) \in A(T)$; otherwise, again, one of $ad, ae, bc, be$ is a diagonal. 
See Figure~\ref{fig:lemH} for the orientations we have determined thus far.  
Notice that $adc$ and $fca$ are triangles, so $df$ is a diagonal. Moreover, since $had$ and $zha$ are triangles, $dz$ is a diagonal. Therefore $zfd$ is a heavy triangle, a contradiction.\medskip

\noindent \emph{Case 3: $|H|\geq 3$}. In this case, one can easily find a tournament in $\mathcal{S}_7$ made of $z$, three vertices of $V_2(z)$ and three vertices of $V_3(z)$, in contradiction with Lemma \ref{lem:S_7} (see the proof of Lemma \ref{twodominatesthree}). \qedhere

\begin{figure}[h]
\centering
\begin{tikzpicture}[scale=.75,inner sep=1.5pt]
\tikzstyle{vtx}=[circle,draw,thick];
\node[vtx] (z) at (0,2) {$z$};
\node[vtx] (a) at (-2,0) {$a$};
\node[vtx] (b) at (2,0) {$b$};
\node[vtx] (c) at (-2,-2) {$c$};
\node[vtx] (d) at (0,-2) {$d$};
\node[vtx] (e) at (2,-2) {$e$};
\node[vtx] (f) at (-3,-5) {$f$};
\node[vtx] (h) at (3,-5) {$h$};
\draw[thick,->,>=latex] (a) -- (z);
\draw[thick,->,>=latex] (b) -- (z);
\draw[thick,->,>=latex] (c) -- (z);
\draw[thick,->,>=latex] (d) -- (z);
\draw[thick,->,>=latex] (f) -- (c);
\draw[thick,->,>=latex] (f) -- (d);
\draw[thick,->,>=latex] (f) -- (e);
\draw[thick,->,>=latex] (a) -- (f);
\draw[thick,->,>=latex] (b) to[bend right=120] (f);
\draw[thick,->,>=latex] (h) to[bend right = 120] (a);
\draw[thick,->,>=latex] (h) -- (b);
\draw[thick,->,>=latex] (d) -- (h);
\draw[thick,->,>=latex] (e) -- (h);
\draw[thick,->,>=latex] (c) -- (h);
\draw[thick,->,>=latex] (e) -- (a);
\draw[thick,->,>=latex] (e) to[bend left = 50] (c);
\draw[thick,->,>=latex] (d) -- (e);
\draw[thick,->,>=latex] (b) -- (e);
\draw[thick,->,>=latex] (c) -- (a);
\draw[thick,->,>=latex] (c) -- (b);
\draw[thick,->,>=latex] (d) -- (c);
\draw[thick,->,>=latex] (a) -- (d);
\draw[thick,->,>=latex] (b) -- (d);
%\draw[thick] (2,2) -- (2.75,3);
\end{tikzpicture}
\caption{The orientations determined by the proof of Lemma \ref{lem:H}.}
\label{fig:lemH}
\end{figure}
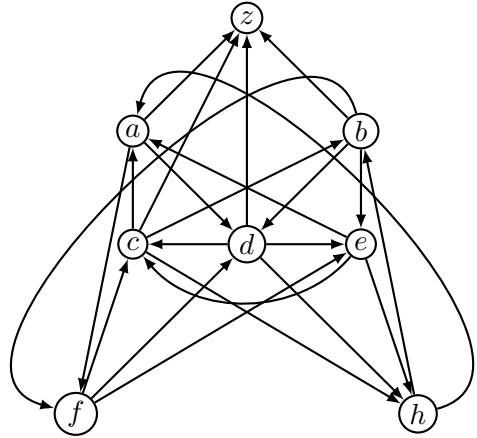
\end{proof}

The next lemma ensures that, at each recursive call of Algorithm \ref{layersalgo}, we can find local solutions by either applying \textsc{CDZ$(T,w)$} or Lemma \ref{lem:2-in-dom_implies_Triangle-free}. Together with the previous lemmas, it is enough to conclude that Algorithm \ref{layersalgo} outputs a feedback vertex set of our (light) tournament $T$. This will be formalized in Lemma \ref{lem:correct}.
\begin{lemma}
\label{lem:Ui-no-T5}
Let $U_0, \dots, U_\ell$ and $S_1, \dots, S_\ell$ be the sets produced by Algorithm \ref{layersalgo}, run on input $(T,w,U_0:=\varnothing, W:=V(T))$. For all $i \in [\ell-1]$, if $U_{i+1}$ is defined as on line \ref{line:ui} of Algorithm~\ref{layersalgo}, then $T[U_{i+1}]$ is $\mathcal{T}_5$-free; and if  $S_{i+1}$ is defined as on line \ref{line:s_i}, then $S_{i+1}$ is a feedback vertex set of $T[U_{i+1} \cup S_{i+1}]$. 
\end{lemma}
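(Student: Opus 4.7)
The plan is to prove the two claims separately, using Lemma~\ref{lem:H} for the fresh-start case and Lemma~\ref{lem:2-in-dom_implies_Triangle-free} for the main-branch case.

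For the fresh-start case (the $U_{i+1}$ produced on line~\ref{line:ui}), the vertex $z_{i+1}$ is chosen to minimise $|N(z) \cap W|$ over $z \in W$, so $z_{i+1}$ has minimum in-degree in the induced subtournament $T[W]$. I would first observe that being light is hereditary on induced subtournaments: any heavy triangle in $T[W]$ has its two diagonal-witnessing vertices already inside $W \subseteq V(T)$, so the same triangle would be heavy in $T$, contradicting the lightness of $T$. Hence $T[W]$ is light, and Lemma~\ref{lem:H} applied inside $T[W]$ with root $z_{i+1}$ yields that $V_2(z_{i+1}) = N(z_{i+1}) \cap W$ is $\mathcal{T}_5$-free. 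This is the layer constructed on line~\ref{line:ui} and is precisely the $\mathcal{T}_5$-free input required by \textsc{CDZ} on line~\ref{line:freshstart}.

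For the main-branch case, showing that $S_{i+1}$ is a feedback vertex set of $T[U_{i+1} \cup S_{i+1}]$ reduces to showing that $T[U_{i+1}]$ is acyclic, which for a tournament is equivalent to triangle-freeness. I would invoke Lemma~\ref{lem:2-in-dom_implies_Triangle-free}: let $z$ be the root of the most recent fresh start preceding step $i+1$. Between consecutive fresh starts the algorithm is simulating a BFS in-arborescence from $z$ inside the current subtournament, so $U_{i+1} \cup S_{i+1}$ coincides with the relevant BFS layer $V_k(z)$ intersected with the current $W$, and the pair $\{z_i, z_i'\}$ returned by \textsc{2-in-dominates} on line~\ref{line:split} lies in $V_{k-1}(z)$ and 2-in-dominates $V_k(z)$. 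Applying Lemma~\ref{lem:2-in-dom_implies_Triangle-free} (which needs $k \geq 3$, guaranteed since the main-branch case is only reached after a fresh start that has already produced two prior layers) then gives that $U_{i+1} = N(z_i) \cap V_k(z)$ is triangle-free, and hence that $T[U_{i+1}]$ is acyclic.

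The main obstacle will be the inductive bookkeeping to match the algorithm's layers with the BFS layers of the subtournament rooted at the latest fresh-start vertex, and to certify that at every main-branch step the \textsc{2-in-dominates} subroutine really does return a suitable pair in the previous layer. The base case (the first main-branch layer after a fresh start) is handled by Lemma~\ref{twodominatesthree}, which provides the required 2-in-domination of $V_3(z)$ by $V_2(z)$; successive layers should be handled by induction on the layer index, the inductive hypothesis being that the previous $U$-layer coincides with the corresponding BFS layer of $T[W]$ from the fresh-start root, so that vertices of the new layer lie in the next BFS layer and the 2-in-dominating pair lies in the correct previous one.
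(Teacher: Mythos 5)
Your handling of the fresh-start case is correct and matches the paper: you apply Lemma~\ref{lem:H} inside $T[W]$, noting (correctly, and more explicitly than the paper) that lightness is hereditary on induced subtournaments.

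For the main-branch case, there is a genuine gap. You claim that between fresh starts the algorithm reproduces the BFS layers $V_k(z)$ of $T[W]$ from the fresh-start root $z$, with $U_{i+1}\cup S_{i+1}$ equal to some $V_k(z)\cap W$ and $\{z_i,z_i'\}\subseteq V_{k-1}(z)$. This is true for the first main-branch layer after a fresh start ($k=3$, via Lemma~\ref{twodominatesthree}), but it can fail afterward. The reason is that line~\ref{line:split} only 2-in-dominates $N(U_i)$, not $N(U_i\cup S_i)$; a vertex of $V_{k+1}(z)$ whose only out-neighbours in $V_k(z)$ lie in $S_i$ (and not in $U_i$) is not covered by $\{z_i,z_i'\}$, so it is left in $W$ and can reappear in a \emph{later} algorithmic layer, even though its BFS distance from $z$ is smaller. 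Thus a later layer $U_{j+1}\cup S_{j+1}$ can straddle two BFS layers of the arborescence rooted at the fresh-start vertex, and your inductive bookkeeping breaks down.

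The paper sidesteps this entirely by never looking back to the fresh-start root: it takes $z_{i-1}$ to be the vertex from the \emph{previous} step for which $U_i = N(z_{i-1})\cap W_{\mathrm{prev}}$ (this vertex exists in both the fresh-start and main-branch cases), and then invokes Lemma~\ref{lem:2-in-dom_implies_Triangle-free} with root $z_{i-1}$ and depth exactly $3$. The needed facts are purely local: $z_i,z_i'\in U_i\subseteq N(z_{i-1})$, and every $u\in U_{i+1}\cup S_{i+1}$ satisfies $z_{i-1}\to u\to z_i\to z_{i-1}$ (or the analogous triangle through $z_i'$). This gives the required diagonals on all pairs within $U_{i+1}$ (resp.\ $S_{i+1}$) without any claim about matching global BFS layers, and so avoids the accumulation of bookkeeping that your approach requires and that can in fact fail.
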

\begin{proof}
 If $U_{i+1}$ is defined as on line \ref{line:ui} of Algorithm~\ref{layersalgo}, then $U_{i+1}$ is equal to $V_{2}(z)$ for some $z\in V(T)$. Therefore, by Lemma \ref{lem:H}, $T[U_{i+1}] = T[V_2(z)]$ is $\mathcal{T}_5$-free. If  $S_{i+1}$ is defined as on line \ref{line:s_i}, then there is some vertex $z_{i-1}\in V(T)$ such that  $U_{i+1} \cup S_{i+1} \subseteq V_3(z_{i-1})$.  By Lemma \ref{twodominatesthree}, $N(U_{i})$ 2-in-dominates $U_{i+1} \cup S_{i+1}$. Therefore, by Lemma \ref{lem:2-in-dom_implies_Triangle-free}, $S_{i+1}$ and $U_{i+1}$ are both triangle-free. Thus, $S_{i+1}$ is a feedback vertex set of $T[U_{i+1} \cup S_{i+1}]$.
\end{proof}

After having shown the correctness of Algorithm \ref{layersalgo}, we focus on bounding the approximation ratio of its output. This mostly amounts to bounding the weight of the local solutions obtained during the algorithm.

\begin{lemma}\label{lem:halfweight}
Let $F_1, \dots, F_\ell$ and $U_0, \dots, U_\ell$ be the sets produced by Algorithm \ref{layersalgo}, run on input $(T,w,U_0:=\varnothing, W:=V(T))$. Then for all $i \in [\ell]$, $w(F_{i}) \leq w(N(U_{i-1}))/2$.
\end{lemma}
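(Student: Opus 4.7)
The plan is to case-split on how $F_i$ is assigned inside Algorithm~\ref{layersalgo}: either (a) $F_i = S_i$ from a regular step, (b) $F_i = \varnothing$ as the first layer after a fresh start, or (c) $F_i = \textsc{CDZ}(U_i,w)$ as the second layer after a fresh start. Cases (a) and (b) are short; the bulk of the work is in Case (c).

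In Case (a), the regular step selects a pair $\{z_{i-1}, z_{i-1}'\} \subseteq U_{i-1}$ on line~\ref{line:split} that 2-in-dominates $N(U_{i-1})$, ordered so that $w(N(z_{i-1}) \cap W) \geq w(N(z_{i-1}') \cap W)$, where $W$ denotes the unseen-vertex set at that moment. Since $U_{i-1}$ was previously removed from $W$, every vertex of $N(z_{i-1}) \cap W$ or $N(z_{i-1}') \cap W$ lies in $N(U_{i-1})$, and therefore $U_i \cup S_i \subseteq N(U_{i-1})$. Disjointness gives $w(U_i) + w(S_i) \leq w(N(U_{i-1}))$, and the weight ordering gives $w(S_i) \leq w(N(z_{i-1}') \cap W) \leq w(U_i)$. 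Combining, $2 w(F_i) = 2 w(S_i) \leq w(U_i) + w(S_i) \leq w(N(U_{i-1}))$.

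Case (b) is immediate since a fresh start is triggered exactly when $N(U_{i-1}) = \varnothing$. For Case (c), one has $U_{i-1} = \{z_{i-1}\}$ where $z_{i-1}$ has minimum in-degree inside the light subtournament $T[W]$ just before the fresh start, so by Lemma~\ref{lem:H} the induced tournament $T[U_i] = T[N(z_{i-1}) \cap W]$ is $\mathcal{T}_5$-free. The Cai-Deng-Zang theorem then makes the basic relaxation $P(T[U_i])$ integral, so $w(F_i)$ equals the LP optimum; the uniform fractional point $x_v := 1/3$ is feasible with value $w(U_i)/3$, yielding $w(F_i) \leq w(U_i)/3$. Using $U_i \subseteq N(z_{i-1}) = N(U_{i-1})$ one concludes $w(F_i) \leq w(N(U_{i-1}))/3 \leq w(N(U_{i-1}))/2$. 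I expect the key step to be Case (c): the content of the bound comes from combining Cai-Deng-Zang integrality with the trivial fractional solution $x \equiv 1/3$, which yields a $1/3$ bound that comfortably beats the required $1/2$.
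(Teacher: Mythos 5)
Your proposal is correct and follows essentially the same three-way case split as the paper's own proof: $F_i = S_i$ (regular step), $F_i = \varnothing$, and $F_i = \textsc{CDZ}(U_i,w)$, with Case (c) handled exactly as in the paper via $\mathcal{T}_5$-freeness (Lemma~\ref{lem:H}), Cai--Deng--Zang integrality, and the feasible all-$\frac{1}{3}$ point giving the stronger bound $w(F_i) \leq w(U_i)/3$. Your unpacking of Case (a) — using $U_i \cup S_i \subseteq N(U_{i-1})$ together with $w(S_i) \leq w(U_i)$ to get $2w(S_i) \leq w(U_i)+w(S_i) \leq w(N(U_{i-1}))$ — spells out what the paper compresses into ``by construction,'' and your use of the inclusion $U_i \subseteq N(U_{i-1})$ rather than equality in Case (c) is in fact slightly cleaner than the paper's stated $w(U_i) = w(N(U_{i-1}))$, since it avoids the (unnecessary and not obviously justified) claim that $N(z_{i-1}) \subseteq W$ at a fresh start.
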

\begin{proof}
If $F_{i}= \emptyset$, then the lemma clearly holds.  
If $F_{i}$ is defined as $S_{i}$ on line \ref{line:f_i}, then, by construction $w(S_{i}) \leq w(N(U_{i-1}))/2$.
Thus, we may suppose that $F_{i}$ is defined as $ \textsc{CDZ}(U_{i},w)$ on line \ref{line:freshstart}, with $U_{i} = N(z_{i-1}) \cap W$, and $U_{i-1} = \{z_{i-1}\}$. By Lemma \ref{lem:Ui-no-T5}, $T[U_{i}]$ is $\mathcal{T}_5$-free, and by~\cite{CDZ01}, $F_{i}$ is a minimum weight feedback vertex set of $T[U_{i}]$. Since the all $\frac{1}{3}$-vector is feasible for the basic relaxation of $T[U_{i}]$, and this relaxation is integral by~\cite{CDZ01}, 
\[
w(F_{i}) \leq \frac{1}{3} w(U_{i})=\frac{1}{3} w(N(U_{i-1})) \leq  \frac{1}{2} w(N(U_{i-1})). \qedhere
\]
\end{proof}

In the next two lemmas, we assume that Algorithm ~\ref{alg:main} is run on input $(T,w)$, and we establish properties of the sets defined within the algorithm during the rounding phase (lines 1-\ref{line:until}).
\begin{lemma}\label{lem:iterative_rounding}
 After the rounding phase of Algorithm~\ref{alg:main},
 $$w(F) \leq \frac{7}{3} \big(\SA_1(T,w) - \SA_0(T-F-Z,w)\big).$$
\end{lemma}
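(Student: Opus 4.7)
The plan is to track the LP value across the iterations of the rounding phase and compare the cost of each vertex added to $F$ with the corresponding drop in the LP value. The unifying observation is that every vertex $v$ added to $F$ at any step has $x_v \geq 3/7$ at the moment it is added: either because it was picked in the initial step (threshold $3/7$ on the $\SA_1$ solution) or because it was picked in a subsequent iteration (threshold $1/2 \geq 3/7$ on some $\SA_0$ solution). Hence in every case $w(v) \leq \tfrac{7}{3}\, w(v)\, x_v$ at the moment $v$ is added, and a telescoping argument will yield the desired bound.

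The first technical step is to verify that restricting the initial optimal solution $x$ of $\SA_1(T, w)$ to $V(T) - F_0$, where $F_0$ denotes the value of $F$ after the initial thresholding, yields a feasible solution to $\SA_0(T - F_0, w)$. This follows from inequality~\eqref{ineq:SAtype1}, which implies $x_a + x_b + x_c \geq 1$ for every triangle in $T - F_0$ since $x_{ab}, x_{bc} \geq 0$ by~\eqref{ineq:SAbounds}. Consequently $\SA_0(T - F_0, w) \leq \sum_{v \notin F_0} w(v) x_v = \SA_1(T,w) - \sum_{v \in F_0} w(v) x_v$, and combining this with $w(F_0) \leq \tfrac{7}{3} \sum_{v \in F_0} w(v) x_v$ gives $w(F_0) \leq \tfrac{7}{3}(\SA_1(T,w) - \SA_0(T - F_0, w))$.

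For the loop iterations, two further observations will be needed. First, whenever a vertex of $T - F - Z$ lies in no triangle, it does not appear in any constraint of $\SA_0(T - F - Z, w)$, so its optimal coordinate is $0$ and moving it into $Z$ does not change the LP value. Second, the restriction of any feasible solution of $\SA_0(T - F_i - Z_i, w)$ to the smaller vertex set $V(T) - F_{i+1} - Z_{i+1}$ is feasible for $\SA_0(T - F_{i+1} - Z_{i+1}, w)$, since every triangle of the smaller tournament is still a triangle of the larger one. Applied to the optimal solution $x^{(i)}$ at iteration $i$, and using $x^{(i)}_v \geq 1/2$ for each newly rounded vertex, these two facts combine to give the per-iteration bound $w(F_{i+1}) - w(F_i) \leq \tfrac{7}{3}\bigl(\SA_0(T - F_i - Z_i, w) - \SA_0(T - F_{i+1} - Z_{i+1}, w)\bigr)$.

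Summing this bound across all iterations telescopes to $w(F) - w(F_0) \leq \tfrac{7}{3}(\SA_0(T - F_0, w) - \SA_0(T - F - Z, w))$, and adding the initial bound on $w(F_0)$ yields $w(F) \leq \tfrac{7}{3}(\SA_1(T,w) - \SA_0(T - F - Z, w))$, as desired. The most delicate point will be confirming feasibility of the restricted LPs at the transition from the $\SA_1$ phase to the $\SA_0$ phase; once this and the triviality of vertices moved into $Z$ are in hand, the remaining algebra is routine.
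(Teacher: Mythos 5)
Your proof is correct and follows essentially the same route as the paper: bound the initial thresholding against the drop from $\SA_1(T,w)$ to $\SA_0$ of the residual tournament, then telescope the per-iteration bound $w(F_{i+1}) - w(F_i) \leq 2\bigl(\SA_0(T-F_i-Z_i,w) - \SA_0(T-F_{i+1}-Z_{i+1},w)\bigr) \leq \tfrac{7}{3}(\cdots)$ across the loop. The paper phrases the telescoping as an induction (and its displayed per-iteration inequality has a small sign/constant typo that your version avoids), but the content is identical, down to the observations that triangle-free vertices contribute $0$ to the LP and that restrictions of feasible solutions to subtournaments remain feasible.
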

\begin{proof}

We proceed by induction on the number of vertices added to $F$ on line~\ref{line:round_up_half}. In the base case, no vertices get added to $F$ on line~\ref{line:round_up_half}. Letting $x$ denote the optimal solution to $\SA_1(T,w)$, we get
\begin{align*}
w(F) &\le \frac{7}{3} \sum_{v \in F} w(v) x_v\\ 
&= \frac{7}{3} \left( \sum_{v \in V(T)} w(v) x_v  - \sum_{v \in V(T-F)} w(v) x_v \right)\\ 
&\le \frac{7}{3} \left( \sum_{v \in V(T)} w(v) x_v  - \sum_{v \in V(T-F-Z)} w(v) x_v \right)\\
&\le \frac{7}{3} \big(\SA_1(T,w) - \SA_1(T-F-Z,w)\big)\\
&\le \frac{7}{3} \big(\SA_1(T,w) - \SA_0(T-F-Z,w)\big)\,.
\end{align*}

Now let $F_{\mathrm{before}}$, $Z_{\mathrm{before}}$, $F_{\mathrm{after}}$, $Z_{\mathrm{after}}$ denote the sets $F$ and $Z$ before and after a single iteration of the loop in lines \ref{line:repeat}--\ref{line:until}. Using an argument similar as the one used above (arguing this time with an optimal solution $x$ to $
\SA_0(T-F_{\mathrm{before}}-Z_{\mathrm{before}},w)$), we get
\begin{align*}
w(F_{\mathrm{after}}) - w(F_{\mathrm{before}}) &\leq \frac{1}{2} \big(\SA_0(T-F_{\mathrm{after}}-Z_{\mathrm{after}},w) - \SA_0(T-F_{\mathrm{before}}-Z_{\mathrm{before}},w)\big)\\
&\leq \frac{7}{3} \big(\SA_0(T-F_{\mathrm{after}}-Z_{\mathrm{after}},w) - \SA_0(T-F_{\mathrm{before}}-Z_{\mathrm{before}},w)\big).
\end{align*}

Hence, assuming that
$$
w(F_{\mathrm{before}}) \leq \frac{7}{3} \big(\SA_1(T,w) - \SA_0(T-F_{\mathrm{before}}-Z_{\mathrm{before}},w)\big)\,,
$$
we get
$$
w(F_{\mathrm{after}}) \leq \frac{7}{3} \big(\SA_1(T,w) - \SA_0(T-F_{\mathrm{after}}-Z_{\mathrm{after}},w)\big)\,.
$$
The result follows.
\end{proof}

\begin{lemma}\label{lem:thirdweight}
After the rounding phase of Algorithm~\ref{alg:main},
$$\SA_0(T-F-Z,w) = w(T-F-Z)/3.$$
\end{lemma}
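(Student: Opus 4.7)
The plan is to establish the equality by proving both inequalities, where the upper bound is trivial and the lower bound follows from LP duality. Let $T' := T - F - Z$ at the moment the rounding phase ends. If $T'$ is empty the claim is trivial, so assume $V(T') \ne \varnothing$. For the upper bound, the constant $\tfrac{1}{3}$-vector is feasible for $\SA_0(T', w)$ since every triangle constraint is satisfied with equality, hence $\SA_0(T',w) \leq w(T')/3$.

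For the lower bound, I first verify the structural invariant that every vertex of $T'$ lies in some triangle of $T'$. At each iteration of the loop the algorithm first moves all triangle-free vertices of the current residual into $Z$. The loop can only exit via the clause that all $x_v < 1/2$ on the current $T - F - Z$; in that last iteration no vertex is added to $F$, so the terminal $T'$ coincides with the residual immediately after the last triangle-free purge, and therefore every vertex of $T'$ lies in a triangle of $T'$.

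Let $x^*$ be optimal for $\SA_0(T',w)$. By the termination condition, $x^*_v < 1/2$ for every $v \in V(T')$. For any $v$, fix a triangle $abv$ in $T'$; the constraint $x^*_a + x^*_b + x^*_v \geq 1$ together with $x^*_a, x^*_b < 1/2$ forces $x^*_v > 0$. Now consider the LP dual of $\SA_0(T', w)$:
\[
\max \sum_{abc \in \triangle(T')} y_{abc} \quad \text{s.t.} \quad \sum_{abc \ni v} y_{abc} \leq w_v \text{ for all } v \in V(T'), \quad y \geq 0,
\]
where the dual variables attached to the upper-bound constraints $x_v \leq 1$ vanish at optimum by complementary slackness since $x^*_v < 1/2 < 1$. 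Let $y^*$ be an optimal dual solution. Complementary slackness and $x^*_v > 0$ give $\sum_{abc \ni v} y^*_{abc} = w_v$ for every $v \in V(T')$. Summing over $v$ yields $\sum_v w_v = 3 \sum_{abc} y^*_{abc}$, so the dual optimum equals $w(T')/3$, and strong LP duality gives $\SA_0(T',w) = w(T')/3$.

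The main obstacle is the bookkeeping behind the triangle invariant: one must argue carefully from the order of operations in the loop that at termination no vertex of $T'$ has become triangle-free since the last purge into $Z$. Once this invariant and the $x^*_v < 1/2$ condition are secured, the LP-duality/complementary-slackness computation is entirely routine.
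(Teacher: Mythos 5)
Your proposal is correct and follows essentially the same route as the paper's proof (itself adapted from Lemma~6 of~\cite{MWV16}): both argue that every vertex of the residual graph lies in a triangle and satisfies $0 < x^*_v < 1/2$, then invoke complementary slackness on the triangle constraints and sum to obtain $w(T') = 3\SA_0(T',w)$. Your write-up is slightly more explicit on two points the paper leaves implicit — that the dual variables on the $x_v\le 1$ bounds vanish, and the loop-order bookkeeping showing the final residual was last purged of triangle-free vertices — but these are minor elaborations rather than a different argument, and the initial upper-bound observation via the all-$\tfrac13$ vector is redundant once strong duality is invoked.
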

\begin{proof}
The proof is the same as \cite[Lemma 6]{MWV16}, but for completeness, we include it here. Let $T'=T-F-Z$.  Suppose $x_v = 0$ for some $v \in V(T')$.  Since every vertex of $T'$ is contained in a triangle, $v$ is in some triangle $vab$ of $T$.  Thus, $x_a+x_b \geq 1$, and so $\max(x_a,x_b) \geq 1/2$, which  contradicts that neither $a$ nor $b$ are in $F$.   Thus $x_v > 0$ for all $v \in V(T')$.  Let $y$ be an optimal solution to the dual of $\SA_{0}(T',w)$.  By primal-dual slackness $\sum_{\triangle: u \in \triangle} y_{\triangle}=w_u$ for all $u \in V(T')$.  Therefore,
\[
w(V(T'))=\sum_{u \in V(T')} \sum_{\triangle: u \in \triangle} y_{\triangle}= \sum_{\triangle \in \triangle(T')} y_{\triangle} \sum_{u \in \triangle} 1 = 3 \sum_{\triangle \in \triangle(T')} y_{\triangle}= 3 \SA_0(T', w). \qedhere
\]
\end{proof}

\begin{lemma}\label{lem:correct}
 Let $F'$ be the set output by Algorithm \ref{layersalgo} on input $(T':=T-F-Z, w, U_0:=\varnothing, W=V(T'))$. Then $F \cup F'$ is a feedback vertex set of $T$ and $w(F') \leq \frac{9}{4} \SA_0(T',w)$.
\end{lemma}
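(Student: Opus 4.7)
The plan has four steps. First, verify that $T'$ is light so that Algorithm~\ref{layersalgo} is applicable. The $\SA_1(T,w)$ optimum restricted to $V(T)-F$ remains feasible for $\SA_1(T-F)$ and has all coordinates below $3/7$, so Lemma~\ref{lem:round} makes $T-F$ light; lightness is hereditary under induced subtournaments (a heavy triangle in an induced subtournament stays heavy in the ambient one), so $T' = T-F-Z$ is light.

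Second, show $F'$ is a feedback vertex set of $T'$. The crux is that every triangle of $T'$ sits in at most three consecutive layers $U_j \cup S_j$. Inside any single BFS, the arc inequality $\ell(u) \leq \ell(v)+1$ for $(u,v) \in A(T')$ forces the layer indices of a triangle to differ pairwise by at most $2$. Fresh starts occur only once the current root layer has no further in-neighbors, which in a tournament corresponds to an SCC boundary; since every triangle lies within a single strongly connected component, no triangle crosses a fresh start. Given this, both $F_{\mathrm{even}} \cup L_1$ and $F_{\mathrm{odd}} \cup L_0$ are FVSs: a triangle sitting in exactly two or three consecutive layers automatically meets both $L_0$ and $L_1$, while a triangle confined to a single layer $U_j \cup S_j$ either lies in the selected $L_*$ or is covered by $F_j$, which is a FVS of $T[U_j \cup S_j]$ by Lemma~\ref{lem:Ui-no-T5}.

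Third, extend to $F \cup F'$ being a FVS of $T$. Suppose some triangle $abc$ persists in $T-F-F'$. If $\{a,b,c\} \cap Z = \varnothing$ then $abc$ is a triangle of $T'$ missed by $F'$, a contradiction. Otherwise pick $a \in \{a,b,c\} \cap Z$ added to $Z$ earliest during the rounding loop; at that moment $b$ and $c$ were neither in $F$ nor already in $Z$, so $abc$ was a triangle through $a$ in the then-current $T-F_{\mathrm{cur}}-Z_{\mathrm{cur}}$, contradicting the rule that $a$ is added to $Z$ only when it lies in no triangle of the current residual.

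Finally, establish the weight bound. Lemma~\ref{lem:halfweight} yields $w(F_j) \leq \tfrac{1}{2} w(U_j \cup S_j)$ for each $j$, which aggregates to $w(F_{\mathrm{even}}) \leq \tfrac{1}{2} w(L_0)$ and $w(F_{\mathrm{odd}}) \leq \tfrac{1}{2} w(L_1)$. Since Algorithm~\ref{layersalgo} returns the lighter of the disjoint unions $F_{\mathrm{even}} \cup L_1$ and $F_{\mathrm{odd}} \cup L_0$,
\[
w(F') \;\leq\; \min\!\bigl(\tfrac{1}{2} w(L_0) + w(L_1),\; w(L_0) + \tfrac{1}{2} w(L_1)\bigr) \;\leq\; \tfrac{3}{4}\bigl(w(L_0)+w(L_1)\bigr) \;=\; \tfrac{3}{4} w(T'),
\]
and combining with $\SA_0(T',w) = w(T')/3$ from Lemma~\ref{lem:thirdweight} gives $w(F') \leq \tfrac{9}{4}\SA_0(T',w)$. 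The most delicate step will be the ``three consecutive layers'' claim across fresh starts, which demands a careful SCC-based argument showing that the exhaustion condition in the fresh-start branch really prevents any triangle from straddling a reset of the BFS.
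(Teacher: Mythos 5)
Your steps 1, 3 and 4 match the paper's approach and are sound; step 3 in particular (the "earliest added to $Z$" argument) fills in a detail the paper's proof leaves implicit. The trouble, as you anticipated, is step 2, and the two tools you reach for there both have gaps.

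First, the arc inequality $\ell(u) \leq \ell(v)+1$ is \emph{not} valid for every arc $(u,v)\in A(T')$. The algorithm sets $U_{j+1}\cup S_{j+1}=N(U_j)\cap W$, i.e.\ only unseen in-neighbors of $U_j$ are promoted to layer $j+1$; unseen in-neighbors of $S_j$ are not. So if $v\in S_j$ and $u\to v$ but $u$ points to nothing in $U_j$, then $u$ can land at layer $j+2$ or much later, violating your inequality. Consequently the ``three consecutive layers'' statement is false in general once a triangle touches some $S_j$ (and even where it holds, it alone is not quite enough: a triangle sitting only in layers $j$ and $j+2$ is ``within three consecutive layers'' yet all of one parity). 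Second, the SCC argument is also off: the fresh-start condition $N(U_i)\cap W=\varnothing$ only guarantees that the remaining $W$ sends no arc into any $U_j$; arcs from $W$ into an $S_j$ are still allowed, and since every $S_j$ has a directed path down to the root, an unseen vertex can easily sit in the same strong component as already-seen vertices. So fresh starts do \emph{not} coincide with SCC boundaries.

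The fix is to argue directly about what $F'$ removes. Observe that for every $j$, $S_j\subseteq F'$ (either $S_j=F_j$ is included among the $F$'s of the chosen parity, or $S_j$ lies in $L_1$/$L_0$ which is included wholesale), and on fresh-start layers the CDZ output $F_j$ is included. Hence $T'-F'$ is contained in the union of the sets $U_{2j}-F_{2j}$ of a single parity, each of which is triangle-free (or made acyclic by CDZ) by Lemma~\ref{lem:2-in-dom_implies_Triangle-free} and Lemma~\ref{lem:H}. It remains to rule out a triangle spanning two such $U$-layers: if $u\in U_j$ and $v$ is placed at a layer $>j+1$ (within or across BFS phases), then $v\notin N(U_j)\cap W$ at the time layer $j+1$ was formed, so $v$ does not point to $u$, forcing $u\to v$. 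Thus all arcs between distinct surviving $U$-layers (whose indices differ by at least $2$) are forward, so no directed cycle survives. This is what the paper's terse ``three consecutive layers'' sentence is really standing in for, and it is the argument you should substitute for the arc-inequality/SCC route.
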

\begin{proof}

Algorithm~\ref{layersalgo} partitions $V(T')$ into layers $S_i\cup U_i$. By symmetry, we may assume that the total weight of the even layers is at least the total weight of the odd layers.  That is, $w(L_0) \geq w(L_1)$, using the notation of the algorithm. Then the output $F'$ consists of all odd layers and of the sets $F_i$, for $i$ even. By construction, $F_i$ is an FVS of $T'[S_i \cup U_i]$, for each $i$.  % Lemma \ref{lem:Ui-no-T5} if $F_i = $ \textsc{CDZ$(U_i)$}, and by Lemma \ref{lem:2-in-dom_implies_Triangle-free} if $F_i = S_i$.
Since all triangles in $T'$ are contained in three consecutive layers, $F'$ is an FVS of $T'$, and hence $F \cup F'$ is an FVS of $T$.  Moreover, since $w(F_i) \leq w(S_i \cup U_i)/2$ for each $i$, we have $w(L_0) - w(\cup_{\text{$j$ even}}F_{j}) \geq w(V(T'))/4$.  Therefore, 
\[
w(F') = w(V(T')) - (w(L_0) - w(\cup_{\text{$j$ even}}F_{j})) \leq \frac{3}{4} w(V(T')) = \frac{9}{4} \SA_0(T',w),
\]
where the last equality follows from Lemma \ref{lem:thirdweight}.
\end{proof}

\section{The Algorithm} \label{sec:algorithm}
Given the results we have already established, it is now easy prove the correctness of Algorithm~\ref{alg:main}.  

\begin{reptheorem}{thm:main}
Algorithm \ref{alg:main} is a $7/3$-approximation algorithm for \fvst. More precisely, the algorithm outputs in polynomial time a feedback vertex set $X := F \cup F'$ such that $w(X) \leq \frac{7}{3} \SA_1(T,w) \leq \frac{7}{3} \OPT(T,w)$.
\end{reptheorem}
\begin{proof}
By Lemma~\ref{lem:correct}, $F \cup F'$ is a feedback vertex set of $T$. It remains to show the approximation guarantee. Recall that $F = \{v: x_v \geq 3/7\}$ where $x$ is an optimal solution for $\SA_1(T,w)$. By Lemma~\ref{lem:iterative_rounding},  $w(F) \leq \frac{7}{3}(\SA_1(T,w) - \SA_0(T-F-Z,w))$.  Since $x$ restricted to $T-F-Z$ is feasible for $\SA_1(T-F-Z)$, Lemma~\ref{lem:correct} implies that $w(F') \leq \frac{9}{4} \SA_0(T-F-Z,w) \leq \frac{7}{3} \SA_0(T-F-Z,w)$.  Adding these two inequalities yields
\[
    w(F') + w(F) \leq \frac{7}{3} \SA_1(T,w) \leq \frac{7}{3} \OPT(T,w). \qedhere
\]
\end{proof}

Finally, as mentioned in the introduction, we have the following corollary on the integrality gap of $\SA_1$ for \fvst{}, which we now formally define.  If $T$ is a tournament and $w : V(T) \to \R_{\geq 0}$, we let
$$
\SA_r(T,w) := \min \left\{\sum_{v \in V(T)} w(v) x_v \mid x \in \SA_r(T)\right\}.
$$
The (worst case) \emph{integrality gap} of $\SA_r$ for \fvst{} is 
$$
\sup_{(T,w)} \frac{\OPT(T,w)}{\SA_r(T,c)}
$$
where the supremum is taken over all tournaments $T$ and all weight functions $w : V(T) \to \Q_{\geq 0}$.

\begin{corollary} \label{cor:integrality_gap}
The integrality gap of $\SA_1$ for \fvst{} is exactly $7/3$.
\end{corollary}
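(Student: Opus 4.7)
The plan is to combine the upper bound already provided by Theorem~\ref{thm:main} with a matching lower bound coming from unweighted random tournaments. For the upper direction, Theorem~\ref{thm:main} produces a feedback vertex set of weight at most $\frac{7}{3}\SA_1(T,w)$, so $\OPT(T,w) \leq \frac{7}{3}\SA_1(T,w)$ for every instance $(T,w)$, giving integrality gap at most $7/3$.

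For the lower direction, I would follow the hint given just before the statement and analyze unweighted random tournaments $(T, \mathbf{1}_T)$ on $n$ vertices. The first step is to argue that, regardless of the orientation of $T$, setting $x_v := 3/7$ for every $v \in V(T)$ is feasible for $\SA_1(T)$. The natural choice of lifted variables is $x_{ab} := 1/7$ for every unordered pair $ab$; plugging these constants into \eqref{ineq:SAtype1}--\eqref{ineq:SAbounds} reduces them to $\tfrac{9}{7} \geq 1 + \tfrac{2}{7}$, $\tfrac{3}{7} \geq \tfrac{3}{7}$, $\tfrac{12}{7} \geq 1 + \tfrac{3}{7}$ and $1 \geq \tfrac{3}{7} \geq \tfrac{1}{7} \geq 0$, all of which hold (the various relabelings of \eqref{ineq:SAtype1} within a triangle give the same inequality after substitution). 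This proves $\SA_1(T, \mathbf{1}_T) \leq \tfrac{3n}{7}$ for every tournament $T$ on $n$ vertices.

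The second step is a lower bound on $\OPT(T, \mathbf{1}_T)$ when $T$ is drawn uniformly at random from tournaments on $n$ vertices. Since $\OPT(T, \mathbf{1}_T) = n - \alpha(T)$, where $\alpha(T)$ denotes the size of a largest acyclic (i.e.~transitive) induced subtournament of $T$, I would appeal to the classical Erd\H{o}s--Moser estimate $\alpha(T) \leq 2\log_2 n + O(1)$ (asymptotically almost surely), which follows from a standard first-moment computation on the expected number of transitive $k$-subsets. Combining these bounds yields
$$
\frac{\OPT(T, \mathbf{1}_T)}{\SA_1(T, \mathbf{1}_T)} \;\geq\; \frac{n - 2\log_2 n - O(1)}{3n/7} \;\xrightarrow{\;n \to \infty\;}\; \frac{7}{3},
$$
so the supremum defining the integrality gap is at least $7/3$, and hence equals $7/3$ together with the upper bound.

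The only step that asks for any real care is the feasibility check of the lifted solution; however, since it reduces to four trivial numerical inequalities after substitution, there is no genuine obstacle in the proof.
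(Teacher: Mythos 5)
Your proposal is correct and follows essentially the same route as the paper: the upper bound is Theorem~\ref{thm:main}, and the lower bound uses the all-$\tfrac{3}{7}$ vector with $x_{uv}=\tfrac{1}{7}$ together with the fact that random tournaments have $\OPT = n - O(\log n)$ with high probability. The only difference is that you spell out the probabilistic-method step (via $\OPT(T,\mathbf{1}_T)=n-\alpha(T)$ and the Erd\H{o}s--Moser first-moment bound on transitive subtournaments), which the paper leaves as an assertion.
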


\begin{proof}
The fact that the integrality gap of $\SA_1$ for \fvst{} is at most $7/3$ follows from Theorem~\ref{thm:main}. For the other inequality, note that for every tournament $T$, the all $\frac{3}{7}$-vector is feasible for $\SA_1(T)$ (by setting $x_{uv}=\frac{1}{7}$ for all $uv$).  On the other hand, it is easy to show via the probabilistic method that for a random $n$-node tournament $T$, $\OPT(T,\mathbf{1}_T) = n - O(\log n)$ with high probability.
\end{proof}

\section{Conclusion}\label{sec:conclusion}
In this paper we give a simple $7/3$-approximation algorithm for \fvst{}, based on performing just one round of the Sherali-Adams hierarchy on the basic relaxation.  It is a bit of a miracle that $\SA_{1}(T)$ already ``knows'' a remarkable amount of structure about feedback vertex sets in tournaments. It is unclear how much more knowledge $\SA_{r}(T)$ acquires as $r$ increases, but our approach naturally begs the question of whether performing a constant number of rounds of Sherali-Adams leads to a $2$-approximation for \fvst. This would solve the main open question from~\cite{LMMPPS20}.

We suspect that performing more rounds does improve the approximation ratio, but the analysis becomes more complicated.  Indeed,  it could be that $\SA_{2}(T)$ already gives a $9/4$-approximation algorithm for $\fvst{}$, since our layering procedure has a $9/4$-approximation factor,
Note that $\SA_2(T)$ does contain new inequalities such as $x_{a} + x_{b} + x_{c} \geq 1 + x_{ab} + x_{ac} + x_{cb} - x_{abc}$, for all $abc \in \triangle(T)$, which may be exploited.  

As further evidence, for the related problem of \emph{cluster vertex deletion}~\cite{ADFH20}, we showed that  one round of Sherali-Adams has an integrality gap of $5/2$, and for every $\epsilon >0$ there exists $r \in \mathbb N$ such that $r$ rounds of Sherali-Adams has integrality gap at most $2+\epsilon$.  Indeed, this work can be seen as unifying the approaches of \cite{MWV16} and some of the polyhedral results of~\cite{ADFH20}. 

\bibliographystyle{abbrv}
\bibliography{references}

\end{document}